\documentclass[a4paper,12pt]{article}

\usepackage{amscd}
\usepackage{amsfonts}
\usepackage{amsmath}
\usepackage{amssymb}
\usepackage{amsthm}
\usepackage[T1]{fontenc} 
\usepackage{here} 
\usepackage{mathrsfs} 
\usepackage{txfonts} 
\usepackage[all]{xy} 
\usepackage{algorithm} 
\usepackage{algpseudocode} 
\usepackage{diagbox} 

\allowdisplaybreaks

\theoremstyle{plain}
\newtheorem{thm}{Theorem}[section]

\newtheorem{prp}[thm]{Proposition}
\newtheorem{crl}[thm]{Corollary}
\newtheorem{conj}[thm]{Conjecture}
\theoremstyle{definition}
\newtheorem{dfn}[thm]{Definition}

\newtheorem{exm}[thm]{Example}

\newcommand{\vs}[1][0.2]{\vspace{#1in}\noindent\ignorespaces}
\newcommand{\ba}{\begin{array*}}
\newcommand{\ea}{\end{array*}}
\newcommand{\be}{\begin{eqnarray*}}
\newcommand{\ee}{\end{eqnarray*}}
\newcommand{\bi}{\begin{itemize}}
\newcommand{\ei}{\end{itemize}}
\newcommand{\bb}{\vs\begin{itembox}}
\newcommand{\eb}{\end{itembox}}
\newcommand{\bc}{\begin{center}}
\newcommand{\ec}{\end{center}}
\newcommand{\bs}{\vs\begin{screen}}
\newcommand{\es}{\end{screen}}

\def\ens#1{{\mathchoice{\left\{ #1 \right\}}{\{ #1 \}}{\{ #1 \}}{\{ #1 \}}}}
\def\set#1#2{{\mathchoice{\left\{ #1 \middle| #2 \right\}}{\{ #1 \mid #2 \}}{\{ #1 \mid #2 \}}{\{ #1 \mid #2 \}}}}
\def\r#1{\text{\rm #1}}

\def\Bigv#1{\left| #1 \right|}
\def\v#1{{\mathchoice{\Bigv{#1}}{| #1 |}{| #1 |}{| #1 |}}}
\def\Bign#1{\left\| #1 \right\|}
\def\n#1{{\mathchoice{\Bign{#1}}{\| #1 \|}{\| #1 \|}{\| #1 \|}}}

\def\ol#1{\overline{#1}{}}

\newcommand{\bC}{\mathbb{C}}

\newcommand{\bN}{\mathbb{N}}

\newcommand{\bP}{\mathbb{P}}
\newcommand{\bQ}{\mathbb{Q}}
\newcommand{\bR}{\mathbb{R}}

\newcommand{\bZ}{\mathbb{Z}}

\newcommand{\cA}{\mathscr{A}}

\newcommand{\cC}{\mathscr{C}}

\newcommand{\cP}{\mathscr{P}}

\newcommand{\cU}{\mathscr{U}}

\newcommand{\C}{\bC}

\newcommand{\N}{\bN}
\newcommand{\Q}{\bQ}
\newcommand{\R}{\bR}
\newcommand{\Z}{\bZ}

\newcommand{\Fp}{\mathbb{F}_p}

\newcommand{\hatZ}{\widehat{\mathbb{Z}}{}}

\newcommand{\ch}{\r{ch}}

\newcommand{\id}{\r{id}}
\newcommand{\im}{\r{im}}

\newcommand{\Spec}{\r{Spec}}

\algnewcommand\algorithmicbreak{{\bf break}}
\algnewcommand\Break{\algorithmicbreak{}}
\algnewcommand\algorithmiccontinue{{\bf continue}}
\algnewcommand\Continue{\algorithmiccontinue{}}

\title{Notes on Algebraic Properties and Non-Standard Analysis of the Ring of Integers Modulo Infinitely Large Primes}
\author{Tomoki Mihara}
\date{}

\begin{document}

\maketitle
\begin{abstract}
As applications of non-standard analysis to the study of transcendence in the ring $\mathscr{A}$ of integers modulo infinitely large primes, we extend transcendence criteria by Anzawa--Funakura and Matsusaka--Seki. Using the extension of the criterion by Anzawa--Funakura, we observe conditional relation between asymptotic behaviour of prime gaps and transcendence in $\mathscr{A}$. We also summarise known algebraic and model theoretic results on $\mathscr{A}$ for number theorists, and share topics in transcendental number theory with algebraists and model theorists.
\end{abstract}

\tableofcontents

\section{Introduction}
\label{Introduction}

Let $\bP$ denote the set of prime numbers, and set
\be
\cA \coloneqq \prod_{p \in \bP} \Fp \bigg/ \bigoplus_{p \in \bP} \Fp.
\ee
In number theory, the ring $\cA$ plays a role of the base ring in the study of finite multiple zeta values, which are finite analogues of multiple zeta values introduced by D.\ Zagier. Starting from \cite{KZ}, there have been various studies on specific elements of $\cA$ by number theorists, but not so many about the algebraic structure of $\cA$ other than the well-known fact that $\cA$ is a $\Q$-algebra are mentioned in those works.

\vs
On the other hand, groups and rings over the form $\prod/\bigoplus$ and the residue fields of such rings at their maximal ideals have ever been deeply studied in algebra and model theory from various other motivations, e.g.\ studies of slender groups, Fuchs-44-groups, ultrapowers of Banach spaces and Banach algebras, $P$-points in $\beta \N \setminus \N$, and so on. Although algebraists and model theorists have ever revealed several properties of $\cA$, it seems that they do not connect those results to number theoretic objects.

\vs
The aim of this paper is to bridge those distinct branches of mathematics in order to accelerate the study of $\cA$. In particular, we summarise known algebraic and model theoretic results on the ring $\cA$ for number theorists, and topics in transcendental number theory with algebraists and model theorists. In order to show application of non-standard analysis to the study of transcendence, we extend transcendence criteria \cite{MS26} Lemma 2.1 (cf.\ \cite{AF24} Proposition 3.7) and \cite{MS26} Lemma 2.3. Although the extensions are not so significant compared to the novelty of the original works, we expect that they help the reader to recognise how to extend other arguments in similar ways. Using the extension of the criterion by Anzawa--Funakura, we prove that the image of the sequence of the $k$-th least prime number above $p \in \bP$ is transcendental in $\cA$ for any $k \in \N_{> 0}$ under a number theoretic conjecture on asymptotic behaviour of prime gaps which follows from Cram\'er's conjecture (cf.\ \cite{Cra36} (4)).

\vs
We briefly explain contents of this paper. In \S \ref{Convention}, we introduce convention for this paper. Especially, we recall an ultrafilter and an ultraproduct for the reader who is not familiar with non-standard analysis. In \S \ref{Global Structure}, we summarise known algebraic and model theoretic results on the ring $\cA$. In \S \ref{Local Structure}, we study $\C$-rational points of $\cA$, and introduce several notions on transcendence of elements of $\cA$. In \S \ref{Transcendence Criteria}, we give extensions of transcendence criteria \cite{MS26} Lemma 2.1 and \cite{MS26} Lemma 2.3. In \S \ref{Relation to Prime Gaps}, we introduce convention for prime gaps, and prove the transcendence of the image of the sequence of the $k$-th least prime number above $p \in \bP$ in $\cA$ for any $k \in \N_{> 0}$ under the number theoretic conjecture.

\section{Convention}
\label{Convention}

We denote by $\aleph_0$ the least infinite cardinal, which is identical to the set $\N$ of non-negative integers, and by $\aleph$ the cardinality $2^{\aleph_0}$ of the continuum.

\vs
For sets $X$ and $Y$, we denote by $X^Y$ the set of maps $Y \to X$. When we handle a sequence or a family $s$ indexed by a set $I$, we frequently use the map notation $s(i)$ instead of the subscript notation $s_i$ to point the entry at $i \in I$, in order to avoid massive use of subscripts.

\vs
For a set $X$, $x \in X$, and a binary relation $R$ on $X$, we set $X_{R x} \coloneqq \set{x' \in X}{x' R x}$. We note that every $d \in \N$ is identical to $\N_{< d}$, and hence for a set $X$, $X^d$ formally means $X^{\N_{< d}}$, which is naturally identified with the set of $d$-tuples in $X$.

\vs
For a set $I$, we denote by $\# I$ its cardinality, and by $\cP(I)$ the set of subsets of $I$. For a set $I$ and a binary relation $R$ on cardinal numbers, we set $\cP_{R \aleph_0}(I) \coloneqq \set{U \in \cP(I)}{\# U R \aleph_0}$.

\vs
Let $I$ be a set. A $\cU \in \cP(\cP(I))$ is said to be an {\it ultrafilter} on $I$ if it satisfies the following:
\bi
\item[(1)] For any $U \in \cU$ and any $U' \in \cP(I)$, $U \subset U'$ implies $U' \in \cU$.
\item[(2)] For any $(U,U') \in \cU^2$, $U \cap U' \in \cU$.
\item[(3)] For any $U \in \cU$, precisely one of $U \in \cU$ and $I \setminus U \in \cU$ holds.
\ei
We denote by $\beta I$ the set of ultrafilters on $I$, and equip it with the Stone topology, i.e.\ the topology generated by $\set{\set{U' \in \cU}{U \subset U'}}{U \in \cP(I)}$. For any $i \in I$, $\set{U \in \cP(I)}{i \in U}$ is an ultrafilter on $I$, and such an ultrafilter is called a {\it principal ultrafilter}. we identify $I$ with the subset of $\beta I$ consisting of principal ultrafilters. Then $\beta I$ is a totally disconnected compact Hausdorff topological space, and $I$ is an open subspace of $\beta I$. In particular, $\beta I \setminus I$ is a totally disconnected compact Hausdorff topological space.

\vs
For $\cU \in \beta I$ and a family $(S_i)_{i \in I}$ of sets indexed by $I$, we denote by $\lim_{i \to \cU} S_i$ the quotient of $\prod_{i \in I} S_i$ by the equivalence relation $=_{\cU}$ defined by
\be
f =_{\cU} f' \stackrel{\r{def}}{\Leftrightarrow} \set{i \in I}{f(i) = f'(i)} \in \cU.
\ee
If $(S_i)_{i \in I}$ is a family of Abelian groups (resp.\ rings, fields, ordered sets), then $\lim_{i \to \cU} S_i$ naturally forms an Abelian group (resp.\ a ring, a field, an ordered set) by {\L}o\'s's theorem (cf.\ \cite{BS69} Theorem 5/2.1).

\vs
For $f \in \prod_{i \in I} S_i$, we denote by $\lim_{i \to \cU} f(i)$ the image of $f$ in $\lim_{i \to \cU} S_i$. If $(S_i)_{i \in I}$ is a family of Abelian groups and $\cU$ is non-principal, then the canonical projection
\be
\prod_{i \in I} S_i & \twoheadrightarrow & \lim_{i \to \cU} S_i \\
f & \mapsto & \lim_{i \to \cU} f(i)
\ee
is a group homomorphism whose kernel contains $\bigoplus_{i \in I} S_i$. Therefore, it induces a surjective group homomorphism $(\prod_{i \in I} S_i) / (\bigoplus_{i \in I} S_i) \twoheadrightarrow \lim_{i \to \cU} S_i$. For $f \in (\prod_{i \in I} S_i) / (\bigoplus_{i \in I} S_i)$, we abuse the notation $\lim_{i \to \cU} f(i)$ to denote the image of $f$ in $\lim_{i \to \cU} S_i$.

\vs
Let $S$ be a set. We identify $S$ with the image of the composite $S \hookrightarrow \lim_{p \to \cU} S$ of the diagonal embedding $S \hookrightarrow S^{\bP}$ and the canonical projection $S^{\bP} \twoheadrightarrow \lim_{p \to \cU} S$.

\section{Global Structure}
\label{Global Structure}

In this section, we recall known properties of the algebraic structure of $\cA$.

\begin{dfn}
We say that a ring is a {\it field boundary} if it is isomorphic to $(\prod_{i \in I} k_i) / (\bigoplus_{i \in I} k_i)$ for a family $(k_i)_{i \in I}$ of fields indexed by a set $I$.
\end{dfn}

Since $\cA$ is a field boundary, $\cA$ satisfy all algebraic properties of field boundaries. Therefore, we recall basic properties of field boundaries.

\vs
A ring $R$ is said to be {\it von Neumann regular} if for any $r \in R$, there exists some $s \in R$ such that $r = rsr$. Although the notion of von Neumann regularity works also in non-commutative ring theory, we always consider a ring to be commutative.

\begin{prp}
\label{von Neumann regular}
Every field boundary is von Neumann regular. In particular, $\cA$ is von Neumann regular.
\end{prp}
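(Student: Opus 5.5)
The plan is to show that the product $\prod_{i \in I} k_i$ is von Neumann regular, and then to observe that von Neumann regularity passes to quotient rings. Since a field boundary is by definition a quotient $(\prod_{i \in I} k_i) / (\bigoplus_{i \in I} k_i)$, this gives the first assertion. The second assertion then follows because $\cA = (\prod_{p \in \bP} \Fp) / (\bigoplus_{p \in \bP} \Fp)$ is a field boundary, with $I = \bP$ and $k_p = \Fp$.

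For the product, take $r \in \prod_{i \in I} k_i$ and define $s$ componentwise:
\be
s(i) \coloneqq \left\{ \begin{array}{ll} r(i)^{-1} & (r(i) \neq 0) \\ 0 & (r(i) = 0) \end{array} \right.
\ee
Each $k_i$ is a field, so this is well defined. Checking at each $i$: if $r(i) \neq 0$ then $r(i)s(i)r(i) = r(i)$, and if $r(i) = 0$ both sides vanish. Hence $rsr = r$.

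For the quotient, let $\pi \colon R \twoheadrightarrow R/J$ be the canonical projection for a ring $R$ and an ideal $J$. Given $\ol{r} \in R/J$, choose a lift $r \in R$ and an $s$ with $r = rsr$. Then $\ol{r} = \pi(r)\pi(s)\pi(r)$, because $\pi$ is a ring homomorphism. Taking $R = \prod_{i \in I} k_i$ and $J = \bigoplus_{i \in I} k_i$ finishes the proof.

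No step is a real obstacle here. The only thing to be careful about is that the choice of $s(i)$ is made componentwise, and that this requires no further hypotheses on the index set $I$.
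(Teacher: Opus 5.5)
Your proof is correct and follows the same route as the paper: fields are von Neumann regular, the property passes to direct products (which you check explicitly with the componentwise quasi-inverse), and then to quotients by ideals. The paper only cites these three facts, so your write-up is a more detailed version of the same argument.
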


\begin{proof}
Every field is von Neumann regular, the direct product of von Neumann regular rings is again von Neumann regular, and the quotient of a von Neumann regular ring by an ideal is again von Neumann regular.
\end{proof}

As benefit to consider von Neumann regularity, we obtain many ring theoretic properties of $\cA$:

\begin{crl}
\label{absolutely flat}
Let $A$ be a field boundary, e.g.\ $\cA$.
\bi
\item[(1)] Every $A$-module is flat.
\item[(2)] Every finitely generated ideal of $A$ is generated by an idempotent, and hence $A$ is a B\'ezout ring.
\item[(3)] Every prime ideal of $A$ is a maximal ideal, and the localisation of $A$ at any prime ideal is a field.
\item[(4)] The ring $A$ is reduced, and hence is isomorphic to a subring of the direct product of fields.
\item[(5)] The spectrum $\Spec(A)$ of $A$ is a totally disconnected compact Hausdorff topological space with respect to the Zariski topology, and hence the constructible topology coincides with the Zariski topology.
\ei
\end{crl}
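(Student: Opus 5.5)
The plan is to derive all five items directly from Proposition \ref{von Neumann regular}, using only the defining identity $r = rsr$. Throughout, $A$ is a field boundary, $r \in A$, and $s \in A$ is chosen with $r = rsr$. The first step is to set $e \coloneqq rs$. Then $e^2 = rsrs = rs = e$, so $e$ is idempotent. Moreover $r = er \in eA$ and $e = rs \in rA$, so $rA = eA$.

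For (2), I will argue by induction on the number of generators. The key identity is that for idempotents $e, f$ one has $eA + fA = (e + f - ef)A$. Indeed $g \coloneqq e + f - ef$ is idempotent, and $ge = e$ and $gf = f$ show both ideals are contained in $gA$. Conversely $g$ lies in $eA + fA$ by construction. Hence every finitely generated ideal is principal, generated by an idempotent, and so $A$ is B\'ezout.

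For (1), I will use the Tor criterion: a module $M$ is flat if $\r{Tor}_1(A/I, M) = 0$ for every finitely generated ideal $I$. By (2), $I = eA$, and then $A = eA \oplus (1-e)A$ with $A/I \cong (1-e)A$, which is a direct summand of $A$. So $A/I$ is projective and the Tor group vanishes.

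For (3), let $\mathfrak p$ be a prime and $r \notin \mathfrak p$. Since $e(1-e) = 0 \in \mathfrak p$ and $e \notin \mathfrak p$ (because $r = er$), we get $1 - e \in \mathfrak p$. Therefore $rs \equiv 1$ modulo $\mathfrak p$, so $A/\mathfrak p$ is a field and $\mathfrak p$ is maximal. For the localisation, any $x \in \mathfrak p$ satisfies $x = x s_x x$ with $1 - x s_x \notin \mathfrak p$ (since $x s_x \in \mathfrak p$). As $(1 - x s_x)x = 0$, we get $x/1 = 0$ in $A_{\mathfrak p}$. Hence $A_{\mathfrak p}$ is a local ring whose maximal ideal vanishes, i.e.\ a field.

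For (4), suppose $r^n = 0$. Then $r = r^2 s = r^3 s^2 = \cdots = r^n s^{n-1} = 0$, so $A$ is reduced. A reduced ring embeds into $\prod_{\mathfrak p} A/\mathfrak p$, because its nilradical, the intersection of all primes, is zero. By (3) each $A/\mathfrak p$ is a field.

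For (5), note that $\Spec(A)$ is quasi-compact, as is every affine spectrum. Each basic open set $D(r) = D(e)$ has complement $V(e) = D(1-e)$, so the basic opens are clopen and the space is zero-dimensional. For Hausdorffness, take distinct primes $\mathfrak p \ne \mathfrak q$; by (3) both are maximal, so we may pick $r \in \mathfrak q \setminus \mathfrak p$. Then $D(e)$ and $D(1-e)$ are disjoint open sets separating $\mathfrak p$ and $\mathfrak q$. A compact Hausdorff space with a clopen basis is totally disconnected. Finally, the constructible topology is generated by the sets $D(f)$ and their complements; here these are all Zariski open, so the constructible topology is contained in the Zariski topology. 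It also always contains the Zariski topology, so the two coincide.

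The only step I expect to need care is the flatness criterion in (1) and the localisation claim in (3). Even these reduce to the idempotent decomposition, so the main work is simply organising the standard facts in this order.
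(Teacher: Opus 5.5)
Your proof is correct and takes the same route as the paper: the paper derives all five items from Proposition \ref{von Neumann regular} by citing the standard exercises of \cite{AM69} (2/27, 2/28, 3/10, 3/11, 3/30, 4/3). You simply write out those exercise arguments explicitly, using the idempotent $e = rs$ with $rA = eA$ together with the Tor criterion for flatness and the standard characterisation of the constructible topology.
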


\begin{proof}
The assertions follow from Proposition \ref{von Neumann regular} (cf.\ \cite{AM69} Exercise 2/27, 2/28, 3/10, 3/11, 3/30, and 4/3).
\end{proof}

We recall the well-known explicit description of prime ideals of a field boundary.

\begin{prp}
\label{characterisation of Spec}
Let $(k_i)_{i \in I}$ be a family of fields indexed by a set $I$. Set $A \coloneqq (\prod_{i \in I} k_i) / (\bigoplus_{i \in I} k_i)$.
\bi
\item[(1)] For any $\cU \in \beta I \setminus I$, $m_{\cU} \coloneqq \set{f \in A}{\lim_{i \to \cU} f(i) = 0]}$ is a maximal ideal of $A$.
\item[(2)] The map
\be
\beta I \setminus I & \to & \Spec(A) \\
\cU & \mapsto & m_{\cU}
\ee
is a homeomorphism with respect to the Zariski topology.
\ei
\end{prp}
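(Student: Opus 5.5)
The plan is to reduce everything to idempotents. Set $B \coloneqq \prod_{i \in I} k_i$. For $U \in \cP(I)$, let $e_U \in B$ be the characteristic idempotent of $U$ and $\ol{e}_U$ its image in $A$. I will pass freely between $f \in A$ and a lift $\tilde f \in B$; this is harmless because every quantity below ($\lim_{i \to \cU}$, zero sets modulo finite sets) is insensitive to finitely many coordinates once $\cU$ is non-principal.

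For (1), I first check that $m_{\cU}$ is well defined and is a proper ideal. It is the kernel of the ring homomorphism $A \to \lim_{i \to \cU} k_i$ recalled in \S \ref{Convention}. By {\L}o\'s's theorem the ultraproduct of fields is a field, and the homomorphism is surjective. Hence its kernel $m_{\cU}$ is maximal.

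For (2), the first step is to describe an arbitrary prime $\mathfrak{p}$. Since $A$ is von Neumann regular (Proposition \ref{von Neumann regular}), Corollary \ref{absolutely flat} (2) says each $f$ generates the same ideal as an idempotent. Concretely, $(f) = (\ol{e}_{S(f)})$, where $S(f) = \set{i}{\tilde f(i) \neq 0}$, using the quasi-inverse $s(i) = \tilde f(i)^{-1}$ or $0$. Thus $f \in \mathfrak{p}$ if and only if $\ol{e}_{S(f)} \in \mathfrak{p}$. Put
\be
\cU_{\mathfrak{p}} \coloneqq \set{U \in \cP(I)}{\ol{e}_{I \setminus U} \in \mathfrak{p}}.
\ee
I then verify that $\cU_{\mathfrak{p}}$ is a non-principal ultrafilter:
\bi
\item[(a)] Upward closure holds because $\mathfrak{p}$ is an ideal and $\ol{e}_{I \setminus U'} = \ol{e}_{I\setminus U'} \ol{e}_{I \setminus U}$ when $U \subset U'$.
\item[(b)] Intersections are preserved because $\ol{e}_{I \setminus (U \cap U')} = \ol{e}_{I\setminus U} + \ol{e}_{I \setminus U'} - \ol{e}_{I\setminus U}\ol{e}_{I\setminus U'}$.
\item[(c)] Exactly one of $U$ and $I \setminus U$ lies in $\cU_{\mathfrak{p}}$. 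At least one does since $\ol{e}_U \ol{e}_{I \setminus U} = 0 \in \mathfrak{p}$ and $\mathfrak{p}$ is prime. Not both, since $\ol{e}_U + \ol{e}_{I\setminus U} = 1 \notin \mathfrak{p}$.
\item[(d)] Finite sets are excluded because $\ol{e}_{I \setminus F} = 1$ for finite $F$.
\ei
Finally, $f \in m_{\cU}$ if and only if $I \setminus S(f) \in \cU$. This shows $m_{\cU_{\mathfrak{p}}} = \mathfrak{p}$ and $\cU_{m_{\cU}} = \cU$, so the map is a bijection. Injectivity also follows directly: if $U \in \cU \setminus \cU'$, then $\ol{e}_{I\setminus U}$ lies in $m_{\cU}$ but not in $m_{\cU'}$.

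For the topology, the basic open sets of $\Spec(A)$ are $D(f) = D(\ol{e}_{S(f)})$, and their preimages are $\set{\cU}{S(f) \in \cU}$. Every $U \in \cP(I)$ arises as some $S(f)$, namely from $e_U$. So the map sends the Stone basis of $\beta I \setminus I$ exactly onto a basis of the Zariski topology. Hence it is continuous and open, and being a bijection it is a homeomorphism. Alternatively, one can conclude from compactness of $\beta I \setminus I$ and the Hausdorff property of $\Spec(A)$ (Corollary \ref{absolutely flat} (5)).

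I expect the main obstacle to be surjectivity, i.e.\ showing every prime is some $m_{\cU}$. It rests on the reduction "$f \in \mathfrak{p} \iff \ol{e}_{S(f)} \in \mathfrak{p}$" via the idempotent $\tilde f s$, together with care about finite modifications. The rest is bookkeeping.
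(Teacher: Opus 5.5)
Your proposal is correct and follows essentially the paper's route. Part (1) is the kernel of the surjection onto the ultraproduct field, and part (2) uses the inverse $\mathfrak{p} \mapsto \set{U \in \cP(I)}{\ol{e}_{I \setminus U} \in \mathfrak{p}}$, which is exactly the paper's correspondence since $\ol{e}_{I \setminus U} = 1 - \ol{e}_U$, justified by the idempotent generation in Corollary \ref{absolutely flat} (2); you simply write out the ultrafilter checks and the basis-matching topological argument that the paper leaves implicit.
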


\begin{proof}
(1) Since $m_{\cU}$ is the kernel of the canonical projection $A \twoheadrightarrow \lim_{i \to \cU} k_i$, it is a maximal ideal.

\vs
(2) For a $U \in \cP(I)$, we denote by $e_U \in A$ the image of the characteristic function of $U$. The first assertion of Corollary \ref{absolutely flat} (2) implies that the correspondence $m \mapsto \set{U \in \cP(I)}{1 - e_U \in m}$ defines the continuous inverse of the given map.
\end{proof}

Using the interpretation, we will study residue fields of $\cA$ in \S \ref{Local Structure}. The interpretation will be used also in our future studies on $\cA$.

\begin{dfn}
We say that an Abelian group is an {\it Abelian group boundary} (resp.\ a {\it cyclic group boundary}) if it is isomorphic to $(\prod_{i \in I} C_i) / (\bigoplus_{i \in I} C_i)$ for a family $(C_i)_{i \in I}$ of Abelian (resp.\ cyclic) groups indexed by a countable infinite set $I$.
\end{dfn}

Since the additive group $\cA = (\prod_{p \in \bP} \Z/p \Z) / (\bigoplus_{p \in \bP} \Z/p \Z)$ and the multiplicative group $\cA^{\times} \cong (\prod_{p \in \bP} \Z/(p-1)\Z) / (\bigoplus_{p \in \bP} \Z/(p-1)\Z)$ are cyclic group boundaries, $\cA$ and $\cA^{\times}$ satisfy all algebraic properties of cyclic group boundaries. Therefore, we recall basic properties of cyclic group boundaries.

\vs
An Abelian group $A$ is said to be {\it reduced} if it admits no non-zero divisible element, and is said to be a {\it Fuchs-44-group} if for any family $(R_j)_{j \in J}$ of reduced Abelian groups indexed by a set $J$ and any group homomorphism $f \colon A \to \bigoplus_{j \in J} R_j$, there exists some $(m,J') \in \Z_{> 0} \times \cP_{< \aleph_0}(J)$ such that $f(ma)(j) = 0$ for any $(a,j) \in A \times (J \setminus J')$.

\vs
The notion of Fuchs-44-group is named after \cite{Fuc70} Chapter IX Problem 44 asking to investigate an Abelian group $A$ such that for any embedding of $A$ into the direct sum of reduced Abelian groups, there exists some $n \in \N_{> 0}$ such that the image of $nA$ is contained in a direct summand given as a finite direct sum, and is deeply studied in \cite{Iva78}.

\begin{prp}
Every cyclic group boundary is a Fuchs-44-group. In particular, $\cA$ and $\cA^{\times}$ are Fuchs-44-groups.
\end{prp}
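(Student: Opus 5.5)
The plan is to reduce to a structural fact about $\prod/\bigoplus$ over a countable index set and then run a diagonal argument. Let $A = (\prod_{i \in I} C_i)/(\bigoplus_{i \in I} C_i)$ with $I$ countable. First I would invoke the classical fact, which I would cite rather than reprove, that such a quotient is algebraically compact (pure-injective). The reason is that the quotient of a countable product by the direct sum is $\aleph_1$-saturated, in the same spirit as the ultraproduct description in \S \ref{Convention}, and $\aleph_1$-saturated modules are pure-injective. Consequently $A = D \oplus B$, where $D$ is divisible and $B$ is reduced algebraically compact. Reduced algebraically compact groups are Hausdorff and complete in the $\Z$-adic topology, so every series $\sum_n b_n$ with $b_n \in n! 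B$ converges in $B$.

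Next, let $f \colon A \to \bigoplus_{j \in J} R_j$ with each $R_j$ reduced. The image $f(D)$ is divisible. Each projection of a divisible subgroup to a reduced $R_j$ is a divisible subgroup of $R_j$, hence zero, so $f(D) = 0$. It therefore suffices to find $m \in \Z_{>0}$ and a finite $J' \subset J$ with $f(mB) \subset \bigoplus_{j \in J'} R_j$. Since $mA = mD + mB$ and $f$ kills $D$, this gives the required pair for $A$.

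Suppose no such pair exists. I then build inductively $b_n \in n! B$, indices $j_n \in J$ and integers $k_n$ as follows. Let $F_{n-1}$ be the union of the supports of $f(b_1), \ldots, f(b_{n-1})$; it is finite. By assumption, $f(N B)$ is not contained in $\bigoplus_{j \in F_{n-1}} R_j$ for any $N$. I choose $N$ to be a large multiple of $n!$ and of all earlier $k_\ell$, and pick $b_n \in N B$ and $j_n \notin F_{n-1}$ with $x_n \coloneqq f(b_n)(j_n) \neq 0$. Since $R_{j_n}$ is reduced, $x_n \notin \bigcap_k k R_{j_n}$. Hence there is $k_n$ with $x_n \notin k_n R_{j_n}$, and I require all later $b_\ell$ ($\ell > n$) to lie in $k_n B$.

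Let $b = \sum_n b_n \in B$, which exists by completeness. For each $n$ I would write $b = (b_1 + \cdots + b_n) + c_n$, where $c_n \in k_n B$ because $k_n B$ is closed and contains all tails. Reading off the $j_n$-coordinate gives
\[
f(b)(j_n) = f(b_1)(j_n) + \cdots + f(b_{n-1})(j_n) + x_n + f(c_n)(j_n).
\]
The first $n-1$ terms vanish since $j_n \notin F_{n-1}$, and $f(c_n)(j_n) \in k_n R_{j_n}$. Thus $f(b)(j_n) \equiv x_n \not\equiv 0 \pmod{k_n R_{j_n}}$. Hence $f(b)$ has infinitely many nonzero coordinates, contradicting $f(b) \in \bigoplus_{j \in J} R_j$.

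The main obstacle is the first step: justifying that $A$ is algebraically compact, or at least that $A/\text{(divisible part)}$ is complete in a topology for which $nB$ is closed. The closedness is needed because $c_n$ is an infinite tail, and it is not automatic from completeness alone. If I cannot cite pure-injectivity cleanly, I would work directly in $\prod C_i$. Lift the $b_n$ to elements of $n! \prod C_i$ modulo the direct sum, and choose them with disjoint supports in $I$ after discarding finitely many coordinates, which is possible because $I$ is countable. Then define $b$ as the coordinatewise sum, so the tails are literally in $k_n \prod C_i$. This uses that a product of groups with $k_n$ dividing each coordinate is $k_n$-divisible in the product.
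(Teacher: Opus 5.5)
Your argument is correct (with one caveat, below, on the meaning of ``reduced''), but it takes a different route from the paper.

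\textbf{The paper's route.} The proof is a two-line reduction. Since $I$ is countable and each $C_i$ is a quotient of $\Z$, the group $A$ is a homomorphic image of $\Z^{\N}$. The Fuchs-44 property passes to homomorphic images by composing with the surjection. Finally, $\Z^{\N}$ is Fuchs-44 by Eda's Lemma 1. All the work is in that citation, and the cyclicity of the $C_i$ is what makes the reduction possible.

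\textbf{Your route.} You use the algebraic compactness of $A$, which the paper also records, and carry out the diagonal argument yourself. This makes the proof self-contained and more general: cyclicity is never used, so your argument applies to every algebraically compact group, in particular to every Abelian group boundary.

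It can also be simplified.
\begin{itemize}
\item The obstacle you flag is not one: $k_n B$ is an open subgroup in the $\Z$-adic topology, hence closed.
\item You need neither the splitting $A = D \oplus B$ nor completeness. Just take $b$ to be a solution in $A$ of the system $b - (b_1 + \cdots + b_n) = k_n y_n$ $(n \geq 1)$. This system is finitely solvable (take $b = b_1 + \cdots + b_N$), hence solvable by algebraic compactness.
\item In your fallback, discarding finitely many coordinates cannot make the supports of the lifts disjoint, but something weaker suffices. Enumerate $I$ and zero out the first $n$ coordinates of a lift of $b_n$ lying in $N \prod_{i \in I} C_i$, where $N$ is the multiplier chosen at step $n$. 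This changes the lift only by an element of $\bigoplus_{i \in I} C_i$. The coordinatewise sum of the lifts is then finite in every coordinate, and its tails lie in $k_n \prod_{i \in I} C_i$. That version needs no algebraic compactness, and it is essentially a direct proof of the product case behind the paper's citation.
\end{itemize}

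\textbf{The caveat.} It concerns the step ``$R_{j_n}$ is reduced, hence $x_n \notin \bigcap_k k R_{j_n}$''. This step is valid if the paper's phrase ``no non-zero divisible element'' is read as ``no non-zero element divisible by every positive integer''. You should state explicitly that this is the reading you use.

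Under the usual notion of reduced (no non-zero divisible subgroup), which is the one in Fuchs's Problem 44, the step fails. For example, let $R$ be the $p$-group generated by $a_0, a_1, a_2, \ldots$ with relations $p a_0 = 0$ and $p^n a_n = a_0$ for $n \geq 1$. It is reduced in the usual sense, yet $0 \neq a_0 \in \bigcap_k kR$. For such targets no suitable $k_n$ need exist, and your diagonal argument does not close as written. The paper never has to deal with heights, because it defers to Eda's lemma.
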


We note that the assertion for $\cA$ is obvious by the divisibility of $\cA$. See also \cite{Eda83} Corollary 4 and \cite{Fuc15} Chapter 6 \S 1 Notes for more general criteria of Fuchs-44-groups.

\begin{proof}
Since the image of a Fuchs-44-group by a group homomorphism is again a Fuchs-44-group, the assertion immediately follows from the fact that $\Z^{\N}$ is a Fuchs-44-group (cf.\ \cite{Eda83} Lemma 1).
\end{proof}

For $n \in \N$, we denote by $e_n \in \Z^{\N}$ the characteristic function of $\ens{n}$. An Abelian group $A$ is said to be a {\it slender group} if for any group homomorphism $f \colon \Z^{\N} \to A$, there exists some $n_0 \in \N$ such that for any $n \in \N_{\geq n_0}$, $f(e_n) = 0$ holds, and is said to be an {\it almost-slender group} if for any group homomorphism $f \colon \Z^{\N} \to A$, there exists some $(m,n_0) \in \Z_{> 0} \times \N$ such that for any $n \in \N_{\geq n_0}$, $f(me_n) = 0$ holds (cf.\ \cite{GRW81} Theorem 3.1 and \cite{Eda83} Definition 1).

\vs
We note that an Abelian group of cardinality $< \aleph$ is slender if and only if it is torsion-free and reduced (cf.\ \cite{Fuc15} \S 13/2 (e) and Lemma 13/2.3), and every slender group is an almost-slender group. The notion of slender groups is originated from studies by J.\ {\L}o\'s in 1950s, and is connected to various results related to Specker's theorem (cf.\ \cite{Spe50}), e.g.\ {\L}o\'s's theorem (cf.\ \cite{EL54}, \cite{Zee55}) and Eda's theorem (cf.\ \cite{Eda82} Corollary 2).

\begin{prp}
\bi
\item[(1)] For any divisible cyclic group boundary $A$, there is no non-zero group homomorphism from $A$ to an almost-slender group. In particular, there is no non-zero group homomorphism from $\cA$ to an almost-slender group.
\item[(2)] For any cyclic group boundary $A$, there is no non-zero group homomorphism from $A$ to a slender group. In particular, there is no non-zero group homomorphism from $\cA^{\times}$ to a slender group.
\ei
\end{prp}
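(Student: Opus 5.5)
Both parts will be derived from the structure of $\Z^{\N}$, since every cyclic group boundary is a quotient of $\Z^{\N}$. Fix a cyclic group boundary $A = (\prod_{i \in I} C_i)/(\bigoplus_{i \in I} C_i)$ and enumerate $I \cong \N$. For each $i$ choose a surjection $\Z \twoheadrightarrow C_i$. The product of these gives a surjection $\pi \colon \Z^{\N} \twoheadrightarrow \prod_i C_i \twoheadrightarrow A$. By construction $\pi(e_n) = 0$ for every $n$, since each $e_n$ lands in $\bigoplus_i C_i$. More generally, $\pi$ kills $\bigoplus_{n} \Z e_n$.

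For (2), let $g \colon A \to S$ with $S$ slender, and put $f = g \circ \pi \colon \Z^{\N} \to S$. The plan is to show that $f$ vanishes on all of $\Z^{\N}$, not just on the $e_n$ for large $n$. I will use the standard fact that a homomorphism from $\Z^{\N}$ to a slender group that kills every $e_n$ is zero. This is the classical characterisation of slenderness (cf.\ \cite{Fuc15} Chapter 13 and \cite{GRW81} Theorem 3.1). If I only have the paper's definition (vanishing on $e_n$ for $n \geq n_0$), I would argue via a block trick. Given $x \in \Z^{\N}$ and an infinite partition $\N = \bigsqcup_k B_k$ into finite blocks, define $\phi \colon \Z^{\N} \to \Z^{\N}$ by $\phi(y) = \sum_k y(k) \cdot x|_{B_k}$. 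This is a well-defined homomorphism, since the blocks are disjoint. Moreover $f(\phi(e_k)) = f(x|_{B_k}) = 0$, because $x|_{B_k}$ has finite support and therefore lies in $\ker \pi$. One then needs the zero-on-all-$e_n$ version of slenderness for $f \circ \phi$ to conclude $f(x) = f(\phi(1,1,\dots)) = 0$. This reduction, namely passing from ``eventually zero on $e_n$'' to ``zero on the whole product'', is the main obstacle. I would rely on the known equivalence: slenderness forces vanishing of $f$ on the subgroup of sequences supported beyond $n_0$. Combining this with the observation that every $x \in \Z^{\N}$ is congruent modulo $\bigoplus \Z e_n$ to such a tail-supported sequence, and that $f$ kills $\bigoplus \Z e_n$, gives $f = 0$. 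Surjectivity of $\pi$ then yields $g = 0$. The statement for $\cA^{\times}$ follows from the identification of $\cA^{\times}$ as a cyclic group boundary recalled above.

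For (1), let $A$ be divisible and $g \colon A \to S$ with $S$ almost-slender. Running the same argument with the almost-slender analogue (cf.\ \cite{Eda83}) gives some $m > 0$ with $g(m \pi(x)) = 0$ for all tail-supported $x$. Since every element of $A$ is represented by a tail-supported sequence, this gives $g(mA) = 0$. Divisibility gives $mA = A$, hence $g = 0$. The case of $\cA$ follows because $\cA$ is a $\Q$-algebra and hence divisible.
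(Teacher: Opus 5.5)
Your proposal is correct and follows essentially the paper's argument. Both rest on tail-vanishing for homomorphisms into (almost-)slender groups, which the paper cites from \cite{Eda82} Corollary 2 and \cite{Eda83} Theorem 1 and applies directly to $\prod_{i \in I} C_i$ rather than after pulling back along $\Z^{\N} \twoheadrightarrow \prod_{i \in I} C_i$ as you do. The rest is shared: the finitely supported part lies in the kernel, the projection is surjective, and (1) uses divisibility.

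The step you flag as the main obstacle follows in one line from the definition. Suppose $f \colon \Z^{\N} \to S$ had $f(\prod_{k \geq n} \Z e_k) \neq 0$ for every $n$. Choose $a_n$ supported in $\N_{\geq n}$ with $f(a_n) \neq 0$ (resp.\ $f(n! \, a_n) \neq 0$ in the almost-slender case). Composing $f$ with the well-defined homomorphism $y \mapsto \sum_n y(n) a_n$ then contradicts (almost-)slenderness.
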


\begin{proof}
Since $A$ is a cyclic group boundary, it is reduced to the case where $A = (\prod_{i \in I} C_i) / (\bigoplus_{i \in I} C_i)$ for a family $(C_i)_{i \in I}$ of cyclic groups indexed by a countable infinite set $I$. We denote by $\pi$ the canonical projection $\prod_{i \in I} C_i \twoheadrightarrow A$. 

\vs
(1) It suffices to show that for any almost-slender group $S$, every group homomorphism $f \colon A \to S$ is zero. Since $S$ is almost-slender and $I$ is countable, there exists some $(m,I') \in \Z_{> 0} \times \cP_{< \aleph_0}(I)$ such that for any $c \in \prod_{i \in I} C_i$ such that $c |_{I'} = 0$, $(f \circ \pi)(mc) = 0$ holds (cf.\ \cite{Eda83} Theorem 1). By $\ker(\pi) = \bigoplus_{i \in I} C_i$, we obtain $(f \circ \pi)(mc) = 0$ for any $c \in \prod_{i \in I} C_i$. By the surjectivity of $\pi$, we obtain $mf = 0$. Since $A$ is divisible, we obtain $f = 0$.

\vs
(2) It suffices to show that for any slender group $S$, every group homomorphism $f \colon A \to S$ is zero. Since $S$ is slender and $I$ is countable, there exists some $I' \in \cP_{< \aleph_0}(I)$ such that for any $c \in \prod_{i \in I} C_i$ such that $c |_{I'} = 0$, $(f \circ \pi)(c) = 0$ holds (cf.\ \cite{Eda82} Corollary 2). By $\ker(\pi) = \bigoplus_{i \in I} C_i$, we obtain $(f \circ \pi)(c) = 0$ for any $c \in \prod_{i \in I} C_i$. By the surjectivity of $\pi$, we obtain $f = 0$.
\end{proof}

\begin{crl}
For any cyclic group boundary $A$, there is no non-zero group homomorphism $A \to \Z$. In particular, there is no non-zero group homomorphism $\cA^{\times} \to \Z$.
\end{crl}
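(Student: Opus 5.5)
The corollary will follow from part (2) of the preceding proposition once we know that $\Z$ is a slender group. So the plan has two steps. First, we establish that $\Z$ is slender. Second, we apply part (2) to an arbitrary cyclic group boundary $A$; the case $A = \cA^{\times}$ is then covered because $\cA^{\times}$ was already observed to be a cyclic group boundary.

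For slenderness of $\Z$, the quickest route uses the criterion recalled after the definition of slender groups. An Abelian group of cardinality $< \aleph$ is slender if and only if it is torsion-free and reduced (cf.\ \cite{Fuc15} Lemma 13/2.3). The group $\Z$ is countable, so its cardinality is $< \aleph$. It is torsion-free. It is also reduced, since a non-zero $n \in \Z$ is not divisible by $2\v{n}$, and hence $\Z$ has no non-zero divisible element. Therefore $\Z$ is slender. Alternatively, one can cite Specker's classical theorem (cf.\ \cite{Spe50}), which says exactly that every homomorphism $\Z^{\N} \to \Z$ kills $e_n$ for all sufficiently large $n$.

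With this in hand, let $f \colon A \to \Z$ be any group homomorphism. Since $\Z$ is slender, part (2) of the proposition gives $f = 0$, which proves the first assertion. Specialising to $A = \cA^{\times} \cong (\prod_{p \in \bP} \Z/(p-1)\Z) / (\bigoplus_{p \in \bP} \Z/(p-1)\Z)$ gives the second assertion.

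I do not expect a genuine obstacle. The only non-formal input is the slenderness of $\Z$, which is a classical theorem, and the paper's convention of citing \cite{Fuc15} or \cite{Spe50} suffices for it. One could also note a shortcut for $\cA$ itself, which is divisible and so trivially admits no non-zero map to $\Z$. This shortcut does not apply to $\cA^{\times}$, however, so the slenderness argument is genuinely needed there.
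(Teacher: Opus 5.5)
Your proposal is correct and follows essentially the same route as the paper: the corollary comes directly from part (2) of the preceding proposition combined with the slenderness of $\Z$. The only difference is the justification of that slenderness. The paper simply cites \cite{Fuc15} Example 13/2.5, whereas you derive it from the torsion-free-and-reduced criterion or from Specker's theorem; both of these are valid.
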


\begin{proof}
The assertion immediately follows from the slenderness of $\Z$ (cf.\ \cite{Fuc15} Example 13/2.5).
\end{proof}

An Abelian group $A$ is said to be an {\it algebraically compact group} if there exists some Abelian group $B$ such that $A \times B$ admits a compact Hausdorff topology compatible with the group structure. In particular, the underlying group of a compact Hausdorff topological Abelian group is an algebraically compact group.

\vs
We note that an Abelian group $A$ is algebraically compact if and only if for any injective group homomorphism $f \colon A \hookrightarrow B$ into an Abelian group $B$, if $f(A)$ is a pure subgroup of $B$, i.e.\ $nf(A) = f(A) \cap nB$ holds for any $n \in \N$, then $f(A)$ is a direct summand of $B$ (cf.\ \cite{Fuc15} Theorem 1.2/6). An Abelian group satisfying this property is also called a {\it pure-injective group}, and is deeply studied in a way parallel to an injective $\Z$-module (cf.\ \cite{Mar60} Introduction and \cite{Fuc15} Chapter 6 Abstract).

\begin{prp}
Every Abelian group boundary is an algebraically compact group. In particular, $\cA$ and $\cA^{\times}$ are algebraically compact groups.
\end{prp}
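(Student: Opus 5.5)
Throughout, $A = (\prod_{i \in I} C_i) / (\bigoplus_{i \in I} C_i)$ with $(C_i)_{i \in I}$ Abelian and $I$ countably infinite; we may take $I = \N$. We use the characterisation recalled just before the statement: $A$ is algebraically compact if and only if it is pure-injective. We do not construct a compact topology directly. We use the equivalent form of pure-injectivity (cf.\ \cite{Fuc15} Chapter 6): every system of equations
\be
\sum_{j} n_{kj} x_j = a_k \qquad (k \in K)
\ee
over $A$, in which each equation involves finitely many unknowns and every finite subsystem is solvable in $A$, is solvable in $A$. It suffices to treat countably many unknowns and equations when checking the criterion of \cite{Fuc15} Theorem 6/1.3, but we will argue in general.

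The key steps are as follows. First, lift each $a_k$ to a representative $\tilde a_k \in \prod_{i} C_i$. Solvability of a finite subsystem $F$ in $A$ means there are representatives $\tilde x_j$ such that every equation in $F$ holds coordinatewise for all $i$ outside a finite set. Second, we exploit the fact that the group is $\prod/\bigoplus$, in the spirit of the $\aleph_1$-saturation of reduced products. Enumerate the unknowns $x_0, x_1, \dots$ and the equations $E_0, E_1, \dots$, assuming the system is countable; the reduction to countable systems is discussed below. For each $n$, take a solution $s_n$ in $A$ of $E_0, \dots, E_n$ and lift it to $\tilde s_n$. Let $N_n$ be the finite exceptional set for these finitely many equations, and enlarge it so that the sets are increasing in a way that also lets us choose $i_n \to \infty$ with $[0, i_n) \supset N_n$. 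Third, define a diagonal solution $\tilde x_j(i) \coloneqq \tilde s_n(j)(i)$ for $i_n \le i < i_{n+1}$. Then each fixed equation $E_k$, for $k \le n$, holds at every coordinate $i \ge i_k$: such an $i$ lies in a block $[i_n, i_{n+1})$ with $n \ge k$ and $i \notin N_n$. Hence $E_k$ holds modulo $\bigoplus$, and the system is solved in $A$.

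The main obstacle is the passage from countable systems to arbitrary ones, since $A$ may be large and general systems can be uncountable. To handle this, we would use the stronger criterion that algebraic compactness of $A$ is equivalent to $A$ being a direct summand of $\prod_i C_i / \bigoplus_i C_i$ computed in a suitable sense. An alternative, which we expect to be cleaner, is to show that $\bigoplus_i C_i$ is a pure subgroup of $\prod_i C_i$ and that $\prod_i C_i / \bigoplus_i C_i$ is $\aleph_1$-saturated as a $\Z$-module, via the argument above applied to countable partial types. The standard fact that countably saturated modules are pure-injective ($\aleph_1$-saturation suffices for pure-injectivity of modules over a countable ring, cf.\ Ziegler / Sabbagh) would then finish the proof. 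As a fallback, we would cite the classical result that reduced products over the Fréchet filter of modules are $\aleph_1$-saturated, hence algebraically compact. Finally, we specialise to $C_p = \Z/p\Z$ and $C_p = \Z/(p-1)\Z$ to obtain the statements for $\cA$ and $\cA^{\times}$.
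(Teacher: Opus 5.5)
Your diagonal construction is correct: it shows that every \emph{countable} finitely solvable system of equations over $A = \prod_i C_i/\bigoplus_i C_i$ is solvable. The gap is the step you yourself flag, and your main suggestions do not close it.

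\begin{itemize}
\item You first assert that countable systems suffice for the criterion of \cite{Fuc15}, but then treat exactly this as the open obstacle. It cannot be obtained by restricting to countable subsystems: with uncountably many unknowns, solutions of different countable subsystems need not cohere.
\item The ``direct summand in a suitable sense'' is not a criterion, and purity of $\bigoplus_i C_i$ in $\prod_i C_i$ plays no role.
\item The block construction does not give $\aleph_1$-saturation. It only controls conditions that hold on a cofinite set of coordinates, i.e.\ equations. A negated equation in $\prod_i C_i/\bigoplus_i C_i$ means failure on an \emph{infinite} set of coordinates, and quantified formulas are not computed coordinatewise in a reduced product.
\end{itemize}

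What you actually have is saturation for countable systems of equations. That is all Sabbagh-type arguments need, but the reduction to that case must be supplied. Otherwise you rely entirely on your fallback citations, and the diagonal argument becomes superfluous.

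The missing idea is a standard two-step reduction.

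First, reduce to a single unknown. Given a finitely solvable system in unknowns $(x_\alpha)_{\alpha<\lambda}$, assign values to the $x_\alpha$ by transfinite recursion so that the system, with the values chosen so far substituted, stays finitely solvable. At limit stages this is automatic, because a finite subsystem involves only finitely many unknowns. At stage $\alpha$ you need an element lying in all sets $\{x \in A : \exists \bar z\, \Sigma_0(x,\bar z)\}$. Here $\Sigma_0$ runs over finite subsystems, $x$ stands for $x_\alpha$, and $\bar z$ for the remaining unknowns of $\Sigma_0$. These sets have the finite intersection property.

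Second, reduce to countably many such sets. Each set is empty or a coset of the subgroup defined by the homogeneous system (constants replaced by $0$). That subgroup depends only on the integer coefficient matrix of $\Sigma_0$, and there are countably many such matrices. Two intersecting cosets of the same subgroup coincide, so the family has the same intersection as a countable subfamily. Realising that subfamily is a countable finitely solvable system (use fresh copies of $\bar z$ for each member), which your diagonal argument solves. The system-of-equations criterion you cite from \cite{Fuc15} then gives algebraic compactness.

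For comparison, the paper gives no argument of its own here. It simply cites the textbook result in \cite{Fuc15} that $\prod_n A_n/\bigoplus_n A_n$ is algebraically compact for countably many groups $A_n$. Your argument, completed as above, would be a self-contained proof of that result. Your fallback (countable saturation of reduced products modulo the Fr\'echet filter, plus Sabbagh's theorem) is a valid but different citation route.
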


We note that the quotient topology on $\cA$ and the relative topology on $\cA^{\times}$ are not Hausdorff, and hence the assertion for $\cA$ and $\cA^{\times}$ is not obvious from the definitions.

\begin{proof}
The assertion is precisely the same as \cite{Fuc15} Corollary 6/12.
\end{proof}

\section{Local Structure}
\label{Local Structure}

In this section, we recall known properties of the algebraic structure of residue fields of $\cA$, and formulate several new transcendence in $\cA$ using non-standard analysis.

\vs
A field is said to be {\it quasi-finite} if it is perfect and its absolute Galois group is isomorphic to $\hatZ$. Every finite field is quasi-finite, but a quasi-finite field is not necessarily of characteristic $> 0$. Indeed, we have the following:

\begin{prp}
\label{quasi-finite}
For any $m \in \Spec(\cA)$, $\cA/m$ is a quasi-finite field such that the multiplicative group $(\cA/m)^{\times}$ naturally forms a module over non-standard integers with $(\cA/m)^{\times}/((\cA/m)^{\times})^2 \cong \Z/2 \Z$, and is isomorphic to a subfield of $\C$.
\end{prp}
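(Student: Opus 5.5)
By Proposition \ref{characterisation of Spec}, every $m \in \Spec(\cA)$ is of the form $m_{\cU}$ for a non-principal ultrafilter $\cU \in \beta \bP \setminus \bP$. Hence $\cA/m \cong \lim_{p \to \cU} \Fp =: F$, and the whole plan is to study this ultraproduct of finite fields, transferring first-order properties of the $\Fp$ by {\L}o\'s's theorem. The order of steps will be: characteristic and perfectness; the multiplicative group; the Galois group; the embedding into $\C$.

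\emph{Characteristic and perfectness.} For each fixed prime $\ell$, the set $\set{p}{\ell \neq 0 \text{ in } \Fp} = \bP \setminus \ens{\ell}$ is cofinite. Since $\cU$ is non-principal, this set lies in $\cU$, so $\ell$ is invertible in $F$ and $F$ has characteristic $0$. In particular $F$ is perfect.

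\emph{Multiplicative group.} The group $F^{\times}$ is $\lim_{p \to \cU} \Fp^{\times}$. Fix generators $g(p)$ of the cyclic groups $\Fp^{\times}$. Then exponentiation $(n(p))_p \mapsto (g(p)^{n(p)})_p$ induces a surjection $\lim_{p \to \cU} \Z \twoheadrightarrow F^{\times}$ from the non-standard integers. This gives $F^{\times}$ the structure of a module over $\lim_{p \to \cU} \Z$ via $x^{N} := \lim_{p \to \cU} x(p)^{N(p)}$, and {\L}o\'s's theorem gives the module axioms. For every odd $p$ we have $\Fp^{\times}/(\Fp^{\times})^2 \cong \Z/2\Z$. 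This is expressed by a first-order sentence: there is a non-square $a$, and for all $x, y$ that are non-squares, $xy$ is a square. Because $\ens{2}$ is not in $\cU$, {\L}o\'s's theorem transfers this sentence to $F$, so $F^{\times}/(F^{\times})^2 \cong \Z/2\Z$.

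\emph{Absolute Galois group.} This is the main step. For each $n \geq 1$, the finite field $\Fp$ has exactly one extension of degree $n$ inside a fixed algebraic closure, and it is cyclic. I would express this as a first-order sentence $\phi_n$ quantifying over coefficients of monic degree-$n$ polynomials. The sentence says that there exists an irreducible monic polynomial of degree $n$, and that every irreducible monic polynomial of degree $n$ splits completely in the field it generates. Both irreducibility and splitting are first-order conditions for fixed $n$. By {\L}o\'s's theorem each $\phi_n$ holds in $F$. Hence, since $F$ is perfect, $F$ has exactly one separable extension of each degree $n$, and that extension is Galois. A cyclicity clause is also needed, for instance that Galois groups of degree-$n$ extensions are cyclic, which can be encoded by the existence of an automorphism of order $n$ given by a polynomial expression in a root. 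The absolute Galois group is profinite with exactly one open subgroup of each index $n$, all of them normal with cyclic quotient. Such a group is procyclic with $\hatZ$ as its full profinite completion, so it is isomorphic to $\hatZ$. I expect the main obstacle to be stating $\phi_n$ carefully, in particular formalising "has exactly one extension of degree $n$, which is cyclic" in first-order language. This is standard from Ax's work on pseudo-finite fields, and I would cite it if needed.

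\emph{Embedding into $\C$.} The field $F$ has characteristic $0$. Its cardinality is at most $\#\prod_{p \in \bP} \Fp = \aleph$ and at least that (standard for non-principal ultraproducts of finite sets of unbounded size), so $\# F = \aleph$. Its transcendence degree over $\Q$ is therefore at most $\aleph$. Choose a transcendence basis, embed the corresponding purely transcendental field into $\C$, which has transcendence degree $\aleph$, and extend the embedding to the algebraic extension $F$ using the algebraic closedness of $\C$.
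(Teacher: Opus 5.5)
Your overall route is the paper's. You identify $\cA/m$ with $\lim_{p \to \cU} \Fp$ through Proposition~\ref{characterisation of Spec}, transfer first-order properties of the $\Fp$ by {\L}o\'s's theorem, and obtain the embedding into $\C$ from $\# \cA/m \leq \aleph$ and $\ch(\cA/m) = 0$. The only difference is that the paper cites \cite{Ngu24} Corollary 3.28 for quasi-finiteness and the non-standard module structure, whereas you reconstruct Ax's argument. Your treatment of the characteristic, the module structure, the square classes and the embedding into $\C$ is fine.

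The one step that fails as written is the derivation of uniqueness of degree-$n$ extensions. Your sentence $\phi_n$ says that a monic irreducible polynomial of degree $n$ exists and that each one splits completely in the field it generates. That only says every degree-$n$ extension of $F$ is normal. For example, $\Q$ satisfies it for $n = 2$ but has infinitely many quadratic extensions. So ``hence $F$ has exactly one separable extension of each degree $n$'' does not follow, and without uniqueness you cannot count open subgroups of the absolute Galois group.

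You must add a clause such as: for any two monic irreducible $P, Q$ of degree $n$ there is a polynomial $r$ of degree $< n$ with $Q(r(x)) \equiv 0 \bmod P(x)$. For fixed $n$ this is first-order in the coefficients. It holds in every $\Fp$ because $\Fp[x]/(P) \cong \mathbb{F}_{p^n}$ contains all roots of $Q$. Combined with normality, it forces any two degree-$n$ subfields of an algebraic closure of $F$ to coincide. With this clause added, your open-subgroup count and the conclusion that the absolute Galois group is $\hatZ$ go through. Your cyclicity clause then becomes redundant, since a profinite group with exactly one open subgroup of each index has only cyclic finite quotients.
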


\begin{proof}
The first assertion immediately follows from Proposition \ref{characterisation of Spec}, \cite{Ngu24} Corollary 3.28, and {\L}o\'s's theorem (cf.\ \cite{BS69} Theorem 5/2.1). The second assertion immediately follows from $\# \cA/m \leq \# \cA = \aleph$ and $\ch(\cA/m) = 0$.
\end{proof}

By the structure of the multiplicative group, $(x^2-f)(x^2-g)(x^2-fg) \in (\cA/m)[x]$ has a root in $\cA/m$ for any $(f,g) \in (\cA/m)^2$. By Euler's criterion, the action of non-standard integers on the multiplicative group satisfies
\be
f^{\frac{x-1}{2}} =
\left\{
\begin{array}{ll}
0 & (f = 0) \\
1 & (f \in ((\cA/m)^{\times})^2) \\
-1 & (f \in (\cA/m)^{\times} \setminus ((\cA/m)^{\times})^2)
\end{array}
\right.
\ee
for any $f \in \cA/m$, where $x$ denotes the non-standard odd integer $\lim_{p \to \cU} p \in \lim_{p \to \cU} \Z$ and $\cU$ denotes the non-principal ultrafilter on $\bP$ corresponding to $m$ by Proposition \ref{characterisation of Spec} (2). In this way, first order properties generically true for $\Fp$ with $p \in \bP$ are reflected by the ultra-product $\lim_{p \to \cU}$.

\begin{crl}
Gel'fand transform
\be
\cA & \to & \C^{\Spec(\cA)(\C)} \\
f & \mapsto & (\phi(f))_{\phi \in \Spec(\cA)(\C)}
\ee
is an injective non-surjective $\Q$-algebra homomorphism, where $\Spec(\cA)(\C)$ denotes the set of $\C$-rational points of $\Spec(\cA)$.
\end{crl}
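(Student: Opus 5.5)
Here $\Spec(\cA)(\C)$ is the set of ring homomorphisms $\phi \colon \cA \to \C$. Each such $\phi$ has kernel a prime ideal, hence a maximal ideal $m$ by Corollary \ref{absolutely flat} (3), and factors through an embedding $\cA/m \hookrightarrow \C$. Conversely, Proposition \ref{quasi-finite} gives at least one embedding $\cA/m \hookrightarrow \C$ for each $m \in \Spec(\cA)$. The map $f \mapsto (\phi(f))_\phi$ is a ring homomorphism because each coordinate is one. It is $\Q$-linear because every ring homomorphism between $\Q$-algebras is automatically $\Q$-linear: it sends $1/n$ to the inverse of the image of $n$.

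\textbf{Injectivity.} Suppose $\phi(f)=0$ for all $\phi$. For each $m \in \Spec(\cA)$, pick an embedding $\iota_m \colon \cA/m \hookrightarrow \C$. Then $\iota_m(f \bmod m)=0$, so $f \in m$. Hence $f$ lies in the intersection of all primes, which is the nilradical. This is zero since $\cA$ is reduced by Corollary \ref{absolutely flat} (4).

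\textbf{Non-surjectivity.} I would use a cardinality argument as the main route.
\begin{itemize}
\item[(a)] $\#\cA = \aleph$, since $\cA$ is a quotient of $\prod_p \Fp$, which has cardinality $\aleph$, and $\cA$ is infinite.
\item[(b)] The target has cardinality $\aleph^{\#\Spec(\cA)(\C)}$, so it suffices to show that $\Spec(\cA)(\C)$ is nonempty and infinite.
\item[(c)] This holds because $\Spec(\cA)\cong\beta\bP\setminus\bP$ by Proposition \ref{characterisation of Spec} (2), which is infinite, and each point carries at least one $\C$-point.
\end{itemize}
Then $\aleph^{\#\Spec(\cA)(\C)} \geq \aleph^{\aleph_0}$. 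To get a strict inequality $> \aleph$, I would use the classical fact $\#(\beta\N\setminus\N)=2^{\aleph}$, which gives $\aleph^{2^{\aleph}} \geq 2^{2^{\aleph}} > \aleph$. I would instead get infinitude of the set of embeddings by composing with the $2^{\aleph}$ field automorphisms of $\C$. That alone does not distinguish the images, though, since automorphisms may fix the image.

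The cleanest way around this is to count homomorphisms directly. There are at least $\#\Spec(\cA)$ of them, with distinct kernels, and $\#\Spec(\cA)=\#(\beta\bP\setminus\bP)=2^{\aleph}$. So the target has cardinality at least $2^{2^{\aleph}} > \aleph = \#\cA$, and surjectivity is impossible.

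\textbf{Main obstacle.} The only non-formal input is $\#(\beta\bP\setminus\bP)=2^{\aleph}$, which I would cite as Pospíšil's theorem. If I wanted to avoid it, I would fall back on an explicit element. Any infinite index set gives the target cardinality at least $\aleph^{\aleph_0} = \aleph$, which is not enough by itself. So instead I would choose two distinct $\phi_1,\phi_2$ with the same kernel $m$, differing by an automorphism of $\C$ that does not fix the image of $\cA/m$. Take the tuple equal to $\phi_1(f_0)$ at $\phi_1$ and $0$ elsewhere, with $f_0\notin m$. It cannot come from any $f$: being zero at $\phi_2$ forces $f\in m$, which forces $\phi_1(f)=0\neq\phi_1(f_0)$. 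Even more simply, take any two $\phi$'s with the same kernel and prescribe values $1$ and $0$ there. This only needs one $m$ with at least two embeddings into $\C$, which holds because $\cA/m$ has transcendence degree $\aleph$ and so admits several embeddings. I would present this elementary argument as the proof and mention the cardinality count as a remark.
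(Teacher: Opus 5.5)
Your proof is correct, and the injectivity part is exactly the paper's argument. The paper's proof only cites reducedness (Corollary \ref{absolutely flat} (4)) and the embeddability of each residue field into $\C$ (Proposition \ref{quasi-finite}), and it leaves non-surjectivity implicit. So the genuinely new content in your proposal is the non-surjectivity argument, where you offer two routes.

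The first route is the cardinality count $\#\cA \leq \aleph < 2^{2^{\aleph}} \leq \#\C^{\Spec(\cA)(\C)}$. This is essentially the estimate the paper proves right afterwards in Corollary \ref{enough points}, and it rests on the non-trivial input $\#(\beta\bP\setminus\bP)=2^{\aleph}$.

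The second route takes two distinct $\phi_1,\phi_2$ with common kernel $m$ and a tuple equal to $1$ at $\phi_1$ and $0$ at $\phi_2$. It avoids that input and explains the failure structurally: the zero set of any tuple in the image is a union of fibres of $\phi \mapsto \ker(\phi)$. For this route you do not need transcendence degree $\aleph$. That claim is true, but it requires the uncountability of $\cA/m$, which you do not prove. It suffices that $\cA/m \neq \Q$. This holds because one of $2,3,6$ is a square in $\cA/m$, by the remark after Proposition \ref{quasi-finite}. Combine it with the standard fact that every element of $\C\setminus\Q$ is moved by some automorphism of $\C$.

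When writing up, drop the claim in (b) that ``nonempty and infinite'' suffices; as you notice yourself, it only gives $\geq \aleph$. Also drop the abandoned remark about composing with automorphisms. Finally, note that (a) as argued only gives $\#\cA \leq \aleph$, which is all you need.
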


\begin{proof}
The assertion immediately follows from Corollary \ref{absolutely flat} (4) and the second assertion of Proposition \ref{quasi-finite}.
\end{proof}

\begin{crl}
\label{enough points}
The set $\Spec(\cA)(\C)$ of $\C$-rational points of $\Spec(\cA)$ is of cardinality $2^{\aleph}$.
\end{crl}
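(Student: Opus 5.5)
A $\C$-rational point of $\Spec(\cA)$ is a ring homomorphism $\cA \to \C$. Each such map factors uniquely through $\cA/m$, where $m$ is its kernel; $m$ is a prime ideal, hence maximal by Corollary \ref{absolutely flat} (3). So $\Spec(\cA)(\C)$ is the disjoint union, over $m \in \Spec(\cA)$, of the sets of field embeddings $\cA/m \hookrightarrow \C$. I will prove the upper bound $\leq 2^{\aleph}$ and the lower bound $\geq 2^{\aleph}$ separately.

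For the upper bound, note that $\Spec(\cA)(\C) \subset \C^{\cA}$, and $\#\C^{\cA} = \aleph^{\aleph} = 2^{\aleph_0 \cdot \aleph} = 2^{\aleph}$. Here $\#\cA = \aleph$ because $\cA$ is a quotient of $\prod_{p} \Fp$, which has cardinality $\aleph$, and $\cA$ is not finite.

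For the lower bound, it suffices to show two things: that $\#\Spec(\cA) = 2^{\aleph}$, and that each fibre is non-empty.

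\textbf{Non-empty fibres.} Proposition \ref{quasi-finite} gives, for every $m$, an embedding $\cA/m \hookrightarrow \C$. Each $m$ therefore contributes at least one $\C$-point, and distinct $m$ give distinct points since $m$ is recovered as the kernel.

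\textbf{Counting $\Spec(\cA)$.} By Proposition \ref{characterisation of Spec} (2), $\Spec(\cA)$ is in bijection with $\beta\bP \setminus \bP$. Since $\bP$ is countably infinite, this is in bijection with $\beta\N \setminus \N$. It remains to know $\#(\beta\N \setminus \N) = 2^{\aleph}$.
\begin{itemize}
\item The inequality $\leq$ is trivial, since $\beta\N \subset \cP(\cP(\N))$.
\item The inequality $\geq$ is the classical Posp\'\i\v{s}il theorem $\#\beta\N = 2^{2^{\aleph_0}}$; removing the countable set $\N$ does not change this cardinality.
\end{itemize}

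The main obstacle is this lower bound on $\#\beta\N$, since the earlier sections do not establish it. I would either cite it, or sketch the standard argument via an independent family. That argument takes a family $(A_t)_{t \in \R}$ of subsets of $\N$ such that every finite Boolean combination $\bigcap_{t \in F} A_t \cap \bigcap_{t \in G} (\N \setminus A_t)$ is infinite; such a family exists, e.g.\ by identifying $\N$ with the set of pairs $(F, \mathcal{F})$ with $F$ a finite subset of $\Q$ and $\mathcal{F} \subset \cP(F)$. For each $\epsilon \in \{0,1\}^{\R}$, the sets $A_t^{\epsilon(t)}$ together with the cofinite sets then have the finite intersection property, so they extend to a non-principal ultrafilter. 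Distinct $\epsilon$ yield distinct ultrafilters, giving $2^{\aleph}$ points of $\beta\N \setminus \N$.

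Combining the upper and lower bounds gives $\#\Spec(\cA)(\C) = 2^{\aleph}$.
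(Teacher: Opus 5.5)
Your proposal is correct and follows the paper's proof. The upper bound comes from $\Spec(\cA)(\C) \subset \C^{\cA}$, and the lower bound comes from the injection $\Spec(\cA) \hookrightarrow \Spec(\cA)(\C)$ given by the embeddings $\cA/m \hookrightarrow \C$ of Proposition \ref{quasi-finite}, together with $\#(\beta\bP \setminus \bP) = 2^{\aleph}$; the paper uses that last equality without comment, and you additionally justify it via an independent family.

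One small slip: being an infinite quotient of a set of cardinality $\aleph$ only gives $\aleph_0 \leq \#\cA \leq \aleph$, and equality needs the countability of the kernel $\bigoplus_p \Fp$. Your upper bound only uses $\#\cA \leq \aleph$, so the argument is unaffected.
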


\begin{proof}
By $\# \cA = \aleph$, we have $\# \Spec(\cA)(\C) \leq 2^{\aleph}$. By Proposition \ref{characterisation of Spec} (2) and the second assertion of Proposition \ref{quasi-finite}, we have $\# \Spec(\cA)(\C) \geq \# \Spec(\cA) = \#(\beta \bP \setminus \bP) = 2^{\aleph}$.
\end{proof}

\begin{dfn}
An $f \in \cA$ is said to be {\it transcendental at $\phi \in \Spec(\cA)(\C)$} if $\phi(f) \in \C$ is a transcendental number, and is said to be {\it everywhere transcendental} if it is transcendental at any $\phi \in \Spec(\cA)(\C)$.
\end{dfn}

\begin{prp}
\label{everywhere transcendental characterisation}
For any $f \in \cA$, $f$ is everywhere transcendental if and only if $F(f) \in \cA^{\times}$ for any $F \in \Q[x] \setminus \ens{0}$.
\end{prp}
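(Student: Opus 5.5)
Both directions reduce to the fact that an element of $\cA$ is a unit precisely when it lies outside every maximal ideal. We also use that every maximal ideal gives rise to a $\C$-rational point through Proposition \ref{quasi-finite}. Recall that a $\C$-rational point $\phi \in \Spec(\cA)(\C)$ is a ring homomorphism $\phi \colon \cA \to \C$. Since $\cA$ is a $\Q$-algebra, such a $\phi$ is automatically $\Q$-linear, so $\phi(F(f)) = F(\phi(f))$ for every $F \in \Q[x]$.

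For the direction ``$\Leftarrow$'', suppose $F(f) \in \cA^{\times}$ for every nonzero $F \in \Q[x]$, and fix $\phi$. If $\phi(f)$ were algebraic, we would take its minimal polynomial $F \in \Q[x] \setminus \ens{0}$. Then $\phi(F(f)) = F(\phi(f)) = 0$. But $F(f)$ is a unit and $\phi$ maps units to units, so $\phi(F(f)) \neq 0$, a contradiction. Hence $\phi(f)$ is transcendental for every $\phi$.

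For the direction ``$\Rightarrow$'', argue by contraposition. Suppose $F(f) \notin \cA^{\times}$ for some nonzero $F \in \Q[x]$. Then $F(f)$ lies in some maximal ideal $m$, which exists by Zorn's lemma, or concretely by Proposition \ref{characterisation of Spec} as $m = m_{\cU}$. By the second assertion of Proposition \ref{quasi-finite}, there is a field embedding $\iota \colon \cA/m \hookrightarrow \C$. Setting $\phi$ to be the composite $\cA \twoheadrightarrow \cA/m \hookrightarrow \C$ gives an element of $\Spec(\cA)(\C)$ with $F(\phi(f)) = \phi(F(f)) = 0$. Since $F \neq 0$, $\phi(f)$ is algebraic, so $f$ is not transcendental at $\phi$.

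The only step needing care is the one that uses Proposition \ref{quasi-finite}, namely producing an embedding of the residue field $\cA/m$ into $\C$. That embedding is exactly what guarantees $\C$-points over every maximal ideal. Two further points are minor: ring homomorphisms send units to nonzero elements, and a $\C$-point of the $\Q$-algebra $\cA$ commutes with evaluation of rational polynomials. Neither causes any difficulty.
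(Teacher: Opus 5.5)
Your proof is correct and follows essentially the same route as the paper. Both arguments rest on $\phi(F(f)) = F(\phi(f))$, use Proposition \ref{quasi-finite} in the forward direction to realise every maximal ideal of $\cA$ as the kernel of some $\phi \in \Spec(\cA)(\C)$, and use in the backward direction that $\phi$ sends units of $\cA$ into $\C^{\times}$. The only differences are cosmetic: you argue the forward direction by contraposition and the backward one via a minimal polynomial, whereas the paper argues both directly.
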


\begin{proof}
Suppose that $f$ is everywhere transcendental. Let $F \in \Q[x] \setminus \ens{0}$. For any maximal ideal $m \subset \cA$, there exists some $\phi \in \Spec(\cA)(\C)$ such that $\ker(\phi) = m$ by the second assertion of Proposition \ref{quasi-finite}, and hence $F(f) \notin m$ by the transcendence of $\phi(f)$. This implies $F(f) \in \cA^{\times}$.

\vs
Suppose that $F(f) \in \cA^{\times}$ for any $F \in \Q[x] \setminus \ens{0}$. Let $\phi \in \Spec(\cA)(\C)$. For any $F \in \Q[x] \setminus \ens{0}$, we have $F(\phi(f)) = \phi(F(f)) \in \C^{\times}$ by $F(f) \in \cA^{\times}$. Therefore $\phi(f)$ is a transcendental number.
\end{proof}

An $f \in \cA$ is said to be {\it naively transcendental} if $F(f) \neq 0$ holds for any $F \in \Q[x] \setminus \ens{0}$. By Proposition \ref{everywhere transcendental characterisation}, every everywhere transcendental element of $\cA$ is naively transcendental. More strongly, ``everywhere'' can be replaced by ``somewhere'' in the following sense:

\begin{prp}
\label{transcendence implies naive transcendence}
Let $f \in \prod_{p \in \bP} \Fp$. Then the following are equivalent:
\bi
\item[(1)] The image of $f$ in $\cA$ is naively transcendental.
\item[(2)] The image of $f$ in $\cA$ is transcendental at some $\phi \in \Spec(\cA)(\C)$.
\item[(3)] The image of $f$ in $\cA$ is transcendental at $2^{\aleph}$-many $\phi \in \Spec(\cA)(\C)$.
\item[(4)] There exists some $U \in \cP_{= \aleph_0}(\bP)$ such that for any $F \in \Q[x] \setminus \ens{0}$, there exists some $U_0 \in \cP_{< \aleph_0}(U)$ such that for any $p \in U \setminus U_0$, $F \in \Z_{(p)}[x]$ and $F(f(p)) \neq 0$ hold.
\ei
\end{prp}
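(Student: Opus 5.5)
I would prove the cycle of implications $(3) \Rightarrow (2) \Rightarrow (1) \Rightarrow (4) \Rightarrow (3)$. Write $\bar{f}$ for the image of $f$ in $\cA$. The implication $(3) \Rightarrow (2)$ is trivial.

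For $(2) \Rightarrow (1)$: if $\phi(\bar{f})$ is transcendental, then for every $F \in \Q[x] \setminus \ens{0}$ we have $\phi(F(\bar{f})) = F(\phi(\bar{f})) \neq 0$. Hence $F(\bar{f}) \neq 0$.

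For $(1) \Rightarrow (4)$ I would use a diagonal construction.
\bi
\item[(a)] Enumerate $\Q[x] \setminus \ens{0}$ as $F_0, F_1, \ldots$ and set $G_n \coloneqq F_0 F_1 \cdots F_n$.
\item[(b)] Since $G_n \neq 0$, naive transcendence gives $G_n(\bar{f}) \neq 0$ in $\cA$.
\item[(c)] Unwinding the definition of $\cA$, this means there are infinitely many $p \in \bP$ with $G_n \in \Z_{(p)}[x]$ and $G_n(f(p)) \neq 0$. Only finitely many $p$ divide denominators of coefficients, so "infinitely many" survives this restriction.
\item[(d)] Choose strictly increasing primes $p_0 < p_1 < \cdots$ with $G_n \in \Z_{(p_n)}[x]$ and $G_n(f(p_n)) \neq 0$ in $\Fp$ for $p = p_n$. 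Put $U \coloneqq \ens{p_n}_n$.
\ee
Now fix $F = F_k$. For $n \geq k$ we have $G_n = F_k \cdot H$ in $\Q[x]$. I need $F_k \in \Z_{(p_n)}[x]$ and $F_k(f(p_n)) \neq 0$; this is the main obstacle, because the factorisation must survive reduction mod $p_n$. To handle it, first replace each $F_i$ by a primitive integer polynomial times a scalar. Units of $\Z_{(p)}$ are harmless for all but finitely many $p$, and Gauss's lemma makes a product of primitive polynomials primitive. Concretely, I would choose the $p_n$ larger than every prime appearing in the contents or denominators of $F_0, \ldots, F_n$. Then each $F_i$ with $i \leq n$ lies in $\Z_{(p_n)}[x]$, its reduction is multiplicative, and nonvanishing of the product at $f(p_n)$ forces nonvanishing of each factor. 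So $U_0 \coloneqq \ens{p_0, \ldots, p_{k-1}}$ works.

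For $(4) \Rightarrow (3)$, take any non-principal ultrafilter $\cU$ on $\bP$ with $U \in \cU$.
\bi
\item[(a)] For each $F \neq 0$, $F(f(p)) \neq 0$ holds on $U$ minus a finite set, and this set lies in $\cU$. By {\L}o\'s's theorem, $F(\lim_{p \to \cU} f(p)) \neq 0$ in $\lim_{p \to \cU} \Fp = \cA/m_{\cU}$.
\item[(b)] Hence the image of $\bar{f}$ in $\cA/m_{\cU}$ is transcendental over $\Q$.
\item[(c)] Any embedding $\cA/m_{\cU} \hookrightarrow \C$ from Proposition \ref{quasi-finite} gives a $\phi \in \Spec(\cA)(\C)$ at which $\bar{f}$ is transcendental.
\ee
Distinct $\cU$ give distinct kernels by Proposition \ref{characterisation of Spec} (2). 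The non-principal ultrafilters containing the infinite set $U$ correspond to $\beta U \setminus U$, which has cardinality $2^{\aleph}$. So there are at least $2^{\aleph}$ such $\phi$, and Corollary \ref{enough points} bounds the total by $2^{\aleph}$.
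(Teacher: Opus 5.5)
Your proof is correct and follows the paper's route. You use the same cycle $(3)\Rightarrow(2)\Rightarrow(1)\Rightarrow(4)\Rightarrow(3)$: your diagonal choice of primes $p_n$ for the products $G_n$ is an explicit construction of the pseudo-intersection the paper invokes, and your $(4)\Rightarrow(3)$ matches the paper's count via the non-principal ultrafilters containing $U$ and $\#(\beta U\setminus U)=2^{\aleph}$. One simplification: the Gauss's lemma/content discussion is unnecessary, since once $p_n$ avoids the denominators of $F_0,\dots,F_n$, reduction modulo $p_n$ is a ring homomorphism on $\Z_{(p_n)}[x]$, so nonvanishing of the product at $f(p_n)$ already forces nonvanishing of each factor.
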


\begin{proof}
The implication from (3) to (2) follows from $1 < 2^{\aleph}$. The implication from (2) to (1) follows from the commutativity of $\phi$ and a polynomial function over $\Q$. The implication from (1) to (4) follows from the countability of the multiplicative set $\Q[x] \setminus \ens{0}$ and the existence of a pseudo-intersection.

\vs
Assume (4). We denote by $\ol{f} \in \cA$ the image of $f$, and by $X \subset \Spec(\cA)$ the clopen subset corresponding to $\set{\cU \in \beta \bP \setminus \bP}{U \in \cU} \subset \beta \bP \setminus \bP$ by the homeomorphism in Proposition \ref{characterisation of Spec} (2). Set $Y \coloneqq \set{\phi \in \Spec(\cA)(\cC)}{\ker(\phi) \in X}$. By Corollary \ref{enough points} and the second assertion of Proposition \ref{quasi-finite}, we have
\be
2^{\aleph} = \# \Spec(\cA)(\C) \geq \# Y \geq \# X = \#(\beta U \setminus U) = 2^{\aleph},
\ee
and hence $\# Y = 2^{\aleph}$. It suffices to show that $\ol{f}$ is transcendental at any $\phi \in Y$.

\vs
Let $F \in \Q[x] \setminus \ens{0}$. It suffices to show $F(\ol{f}) \notin \ker(\phi)$. By the choice of $U$, there exists some $U_0 \in \cP_{< \aleph_0}(U)$ such that for any $p \in U \setminus U_0$, $F \in \Z_{(p)}[x]$ and $F(f(p)) \neq 0$ hold. By $\# U = \aleph_0 > \# U_0$, we have $\phi(F(\ol{f})) \in \C^{\times}$, i.e.\ $F(\ol{f}) \notin \ker(\phi)$.
\end{proof}

We further introduce a hierarchy of stronger transcendence associated to the growth rates of sequences in terms of ultra-polynomial (cf.\ \cite{Ngu24} Definition 3.2). For this purpose, we prepare a notation from non-standard analysis.

\vs
Let $\cU \in \beta \bP \setminus \bP$. We define a preorder $\leq_{\cU}$ on $\R^{\bP}$ by
\be
s \leq_{\cU} t \stackrel{\r{def}}{\Leftrightarrow} \set{p \in \bP}{s(p) \leq t(p)} \in \cU,
\ee
i.e.\ the pull-back of the order of the set $\lim_{p \to \cU} \R$ of non-standard real numbers. For $(s,t) \in \R^X \times \R^Y$ with $(X,Y) \in \cP(\R)^2$, we define
\be
s \in O(t) & \stackrel{\r{def}}{\Leftrightarrow} & \exists c \in \R_{> 0}[\exists x_0 \in X \cap Y[\forall x \in (X \cap Y)_{\geq x_0}[\v{s(x)} \leq ct(x)]]] \\
s \in o(t) & \stackrel{\r{def}}{\Leftrightarrow} & \forall c \in \R_{> 0}[\exists x_0 \in X \cap Y[\forall x \in (X \cap Y)_{\geq x_0}[\v{s(x)} \leq ct(x)]]].
\ee
Similarly, for $(s,t) \in (\R^{\bP})^2$, we define
\be
s \in_{\cU} O(t) & \stackrel{\r{def}}{\Leftrightarrow} & \exists c \in \R_{> 0}[\v{s} \leq_{\cU} ct] \\
s \in_{\cU} o(t) & \stackrel{\r{def}}{\Leftrightarrow} & \forall c \in \R_{> 0}[\v{s} \leq_{\cU} ct].
\ee
The following connects the asymptotic relations $\in O$ and $\in o$ with the non-standard relations $\in_{\cU} O$ and $\in_{\cU} o$:

\begin{prp}
\label{domination}
For any $(s,t) \in (\R^{\bP})^2$, $s \in O(t)$ implies $s \in_{\cU} O(t)$, and $s \in o(t)$ implies $s \in_{\cU} o(t)$.
\end{prp}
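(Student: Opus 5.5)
The plan is to unwind both definitions and use only two facts about the non-principal ultrafilter $\cU$ on $\bP$: it is upward closed (axiom (1)), and it contains every cofinite subset of $\bP$. The second fact holds because a finite set $U_0 \subset \bP$ cannot lie in $\cU$. Indeed, if it did, then by axioms (2) and (3) some singleton $\ens{p}$ with $p \in U_0$ would lie in $\cU$, making $\cU$ principal. Hence $\bP \setminus U_0 \in \cU$ by (3).

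For the first implication, assume $s \in O(t)$. Here $X = Y = \bP$, so $X \cap Y = \bP$, and there exist $c \in \R_{> 0}$ and $p_0 \in \bP$ such that $\v{s(p)} \leq c t(p)$ for every $p \in \bP_{\geq p_0}$. Therefore
\be
\set{p \in \bP}{\v{s(p)} \leq c t(p)} \supset \bP_{\geq p_0}.
\ee
The right-hand side is cofinite in $\bP$, so it lies in $\cU$, and by upward closure the left-hand side lies in $\cU$ as well. This says exactly that $\v{s} \leq_{\cU} ct$, so $s \in_{\cU} O(t)$ with the same constant $c$.

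For the second implication, assume $s \in o(t)$ and fix an arbitrary $c \in \R_{> 0}$. The definition of $o$ provides a $p_0$, now depending on $c$, such that $\v{s(p)} \leq c t(p)$ for all $p \in \bP_{\geq p_0}$. The same cofinite-set argument gives $\v{s} \leq_{\cU} ct$. Since $c$ was arbitrary, $s \in_{\cU} o(t)$.

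There is no real obstacle. The only point needing care is that $\cU$ is non-principal, which is exactly what forces cofinite sets into $\cU$. This is guaranteed because the standing assumption $\cU \in \beta \bP \setminus \bP$ was fixed just before the definition of $\leq_{\cU}$.
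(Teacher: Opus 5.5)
Your proof is correct and follows the paper's route: the paper's one-line proof observes that $\bP_{\geq p_0} \in \cU$ for every $p_0 \in \bP$, and the conclusion then follows from upward closure, exactly as in your argument. You also spell out why non-principality forces cofinite sets into $\cU$, a step the paper leaves implicit.
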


\begin{proof}
The assertions immediately follow from $\bP_{\geq p_0} \in \cU$ for any $p_0 \in \bP$.
\end{proof}

For $c \in \Fp$ with $p \in \bP$, we denote by $R_p(c)$ the unique representative of $c$ in $\Z \cap (-p/2,p/2]$. For $F \in R[x]$ with a ring $R$ and $d \in \N$, we denote by $a_d(F) \in R$ the coefficient of $F$ at degree $d$. For $F \in \prod_{p \in \bP} (\Fp[x])$, we denote by $\n{F} \in \N^{\bP}$ and $\deg(F) \in \N^{\bP}$ the maps defined by
\be
\n{F}(p) & \coloneqq & \max \set{\v{R_p(a_d(F(p)))}}{d \in \N} \\
\deg(F)(p) & \coloneqq & \deg(F(p)),
\ee
where $\deg(F(p))$ denotes the degree of $F(p) \in \Fp[x]$, which is defined as $0$ when $F(p) = 0$. For $(B,D) \in (\R_{\geq 0}^{\bP})^2$, we set
\be
P_{B,D,\cU} \coloneqq \set{F \in \prod_{p \in \bP} (\Fp[x])}{\n{F} \in_{\cU} O(B) \land \deg(F) \in_{\cU} O(D)}.
\ee
For $F \in \Z[x]$, we denote by $F_{\cA} \in \prod_{p \in \bP} (\Fp[x])$ the map
\be
F_{\cA}(p) \coloneqq \sum_{d=0}^{\infty} (a_d(F) + p \Z) x^d.
\ee
We note that if $B$ and $D$ are non-zero constant maps, then $P_{B,D,\cU}$ is essentially the set of polynomials in $\Z$ in the following sense:

\begin{prp}
\label{BD-polynomial}
For any $(B,D) \in (\R_{\geq 0}^{\bP})^2$, if $\lim_{p \to \cU} B(p) \in \R_{> 0}$ and $\lim_{p \to \cU} D(p) \in \R_{> 0}$, then $P_{B,D,\cU} = \set{F \in \prod_{p \in \bP} (\Fp[x])}{\exists G \in \Z[x][F =_{\cU} G_{\cA}]}$.
\end{prp}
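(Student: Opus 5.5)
We prove the two inclusions separately, in each case reading the hypotheses on $B$ and $D$ as saying that they are $\cU$-bounded above by constants.

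First we translate the hypotheses. The condition $\lim_{p \to \cU} B(p) \in \R_{> 0}$ means that the non-standard real $\lim_{p \to \cU} B(p)$ is a standard positive real $b$. So there are constants $0 < b_- < b_+$ with $\set{p \in \bP}{b_- \leq B(p) \leq b_+} \in \cU$, and similarly constants $d_- < d_+$ for $D$. Consequently $s \in_{\cU} O(B)$ holds if and only if $s$ is $\cU$-bounded by a real constant, i.e.\ $\v{s} \leq_{\cU} c$ for some $c \in \R_{> 0}$. The same holds with $D$ in place of $B$.

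For the inclusion $\supset$, let $G \in \Z[x]$ and $F =_{\cU} G_{\cA}$. For every prime $p > 2\max_d \v{a_d(G)}$ we have $R_p(a_d(G) + p\Z) = a_d(G)$, so $\n{G_{\cA}}(p) = \max_d \v{a_d(G)}$ and $\deg(G_{\cA})(p) \leq \deg(G)$. Since $\bP_{\geq p_0} \in \cU$ (as in the proof of Proposition \ref{domination}) and $\cU$ is closed under finite intersections, we get $\n{F} \leq_{\cU} \max_d \v{a_d(G)}$ and $\deg(F) \leq_{\cU} \deg(G)$. Both bounds are constants, so $F \in P_{B,D,\cU}$ by the translation above. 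If $G = 0$, we use the constant $c = 1$ instead, since the definition requires $c > 0$.

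For the inclusion $\subset$, let $F \in P_{B,D,\cU}$. Then there are $N, M \in \N$ with $U \coloneqq \set{p}{\n{F}(p) \leq N \land \deg(F)(p) \leq M} \in \cU$. For $p \in U$, the tuple $(R_p(a_d(F(p))))_{d \leq M}$ lies in the finite set $\Z_{[-N,N]}^{M+1}$. We partition $U$ into finitely many pieces $U_v$, one for each value $v$ of this tuple. Because $\cU$ is an ultrafilter, exactly one piece $U_v$ belongs to $\cU$: property (3) applied iteratively shows that a finite union lies in $\cU$ only if some member does. Set $G \coloneqq \sum_{d \leq M} v_d x^d \in \Z[x]$. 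For $p \in U_v$ we have $F(p) = G_{\cA}(p)$, because the coefficients agree modulo $p$ and both have degree $\leq M$. Hence $F =_{\cU} G_{\cA}$.

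The main obstacle is the pigeonhole step over the ultrafilter: a finite partition of a $\cU$-large set has a $\cU$-large piece. This is a standard fact, but we must check it carefully from the axioms (1)--(3) given in \S \ref{Convention}. Everything else is bookkeeping with $R_p$ for large $p$.
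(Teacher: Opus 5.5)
Your proof is correct and is essentially the paper's argument. For the inclusion $\subset$, the paper likewise uses that $\cU$-boundedness of $\n{F}$ and $\deg(F)$ by constants confines the representatives $R_p(a_d(F(p)))$ to a finite set of integers, so the ultrafilter selects a single $G \in \Z[x]$ (the paper takes coefficientwise ultralimits in $\Z \cap [-c_0 B_{\infty}, c_0 B_{\infty}]$, whereas you pigeonhole the whole coefficient tuple at once, which makes the construction of $G$ more explicit); for $\supset$ both arguments bound $\n{F}$ and $\deg(F)$ $\cU$-almost everywhere by $\max_d \v{a_d(G)}$ and $\deg(G)$ and rescale using the positivity of the ultralimits of $B$ and $D$.
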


\begin{proof}
We set $P' \coloneqq \set{F \in \prod_{p \in \bP} (\Fp[x])}{\exists G \in \Z[x][F =_{\cU} G_{\cA}]}$, $B_{\infty} \coloneqq \lim_{p \to \cU} B(p)$, and $D_{\infty} \coloneqq \lim_{p \to \cU} D(p)$.

\vs
Let $F \in P_{B,D,\cU}$. By $B_{\infty} \in \R_{> 0}$ and $\n{F} \in_{\cU} O(B)$, we have
\be
\set{p \in \bP}{\forall d \in \N[\v{R_p(a_d(F(p)))} \leq c_0 B_{\infty}]} \in \cU
\ee
for some $c_0 \in \R_{> 0}$. Since $\Z \cap [-c_0 B_{\infty}, c_0 B_{\infty}]$ is a finite set, we have $\lim_{p \to \cU} R_p(a_d(F(p))) \in \Z \cap [-c_0 B_{\infty}, c_0 B_{\infty}]$. By $D_{\infty} \in \R_{> 0}$ and $\deg(F) \in_{\cU} O(B)$, we have
\be
\set{p \in \bP}{\deg(F(p)) \leq c_1 D_{\infty}]} \in \cU
\ee
for some $c_1 \in \R_{> 0}$. Since $\N \cap \R_{\leq c_1 D_{\infty}}$ is a finite set, we have $\lim_{p \to \cU} \deg(F(p)) \in \N \cap \R_{\leq c_1 D_{\infty}}$. We obtain $F \in P'$.

\vs
Let $F \in P'$. Take $G \in \Z[x]$ such that $F =_{\cU} G_{\cA}$. Set $B' \coloneqq \max \set{\v{a_d(G)}}{d \in \N}$. Then we have $\n{F} \leq_{\cU} B' =_{\cU} (B_{\infty}^{-1}B')B$ and $\deg(F) \leq_{\cU} \deg(G) =_{\cU} (D_{\infty}^{-1}\deg(G))D$. This implies $F \in P_{B,D,\cU}$.
\end{proof}

By Proposition \ref{BD-polynomial}, the following gives a notion generalising the everywhere transcendence:

\begin{dfn}
Let $(B,D) \in (\R_{\geq 0}^{\bP})^2$. An $f \in \cA$ is said to be {\it $(B,D)$-transcendental at $\cU \in \beta \bP \setminus \bP$} if $(\lim_{p \to \cU} F(p))(\lim_{p \to \cU} f(p)) \neq 0$ for any $F \in P_{B,D,\cU}$ with $\lim_{p \to \cU} F(p) \neq 0$, and is said to be {\it everywhere $(B,D)$-transcendental} if it is $(B,D)$-transcendental at any $\cU \in \beta \bP \setminus \bP$.
\end{dfn}

We show that the parametrised transcendence forms a hierarchy along the growth rate.

\begin{prp}
\label{hierarchy}
Let $(B,D) \in (\R_{\geq 0}^{\bP})^2$ and $(B',D') \in (\R_{\geq 0}^{\bP})^2$.
\bi
\item[(1)] For any $\cU \in \beta \bP \setminus \bP$, if $B' \in_{\cU} O(B)$ and $D' \in_{\cU} O(D)$, then every element of $\cA$ $(B,D)$-transcendental at $\cU$ is $(B',D')$-transcendental at $\cU$.
\item[(2)] If $B' \in O(B)$ and $D' \in O(D)$, then every everywhere $(B,D)$-transcendental element of $\cA$ is everywhere $(B',D')$-transcendental.
\ei
\end{prp}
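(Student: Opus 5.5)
The plan is to show that shrinking the growth bounds shrinks the test set of polynomials. Since $(B',D')$-transcendence at $\cU$ quantifies over the smaller set, it is then weaker. For (1), the key step is the inclusion $P_{B',D',\cU} \subset P_{B,D,\cU}$ whenever $B' \in_{\cU} O(B)$ and $D' \in_{\cU} O(D)$. Granting this inclusion, let $f$ be $(B,D)$-transcendental at $\cU$ and take $F \in P_{B',D',\cU}$ with $\lim_{p \to \cU} F(p) \neq 0$. Then $F \in P_{B,D,\cU}$, so $(\lim_{p \to \cU} F(p))(\lim_{p \to \cU} f(p)) \neq 0$ by hypothesis. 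This is exactly the defining condition of $(B',D')$-transcendence at $\cU$.

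To prove the inclusion, I would show that $\in_{\cU} O$ is transitive. Suppose $\n{F} \in_{\cU} O(B')$ and $B' \in_{\cU} O(B)$. Then there are $c_1, c_2 \in \R_{> 0}$ with $\set{p \in \bP}{\v{\n{F}(p)} \leq c_1 B'(p)} \in \cU$ and $\set{p \in \bP}{\v{B'(p)} \leq c_2 B(p)} \in \cU$. Since $B' \geq 0$ and $\n{F} \geq 0$, on the intersection of these two sets we get $\n{F}(p) \leq c_1 B'(p) \leq c_1 c_2 B(p)$. The intersection lies in $\cU$ by ultrafilter axiom (2), and the set where $\n{F}(p) \leq c_1 c_2 B(p)$ holds contains it, so it lies in $\cU$ by axiom (1). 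Hence $\n{F} \in_{\cU} O(B)$. The same argument applied to $\deg(F)$, $D'$ and $D$ gives $\deg(F) \in_{\cU} O(D)$, and therefore $F \in P_{B,D,\cU}$.

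For (2), Proposition \ref{domination} turns $B' \in O(B)$ and $D' \in O(D)$ into $B' \in_{\cU} O(B)$ and $D' \in_{\cU} O(D)$ for every $\cU \in \beta \bP \setminus \bP$. Applying (1) at each such $\cU$ then gives the claim. No step is a genuine obstacle. The only point needing care is the sign bookkeeping in the transitivity step. The definition of $\in_{\cU} O$ compares $\v{s}$ with $ct$ rather than $\v{t}$, so the nonnegativity of $B'$ and $D'$, which holds since they lie in $\R_{\geq 0}^{\bP}$, is what lets the two inequalities chain.
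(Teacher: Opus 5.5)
Your proposal is correct and follows the paper's proof exactly: the paper derives (1) from the inclusion $P_{B',D',\cU} \subset P_{B,D,\cU}$ and (2) from (1) via Proposition \ref{domination}, and your transitivity argument for $\in_{\cU} O$ simply fills in the details of that inclusion. As a minor aside, the chaining does not actually need $B' \geq 0$, since $c_1 B'(p) \leq c_1 \v{B'(p)}$ holds for any real value $B'(p)$.
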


\begin{proof}
The assertion (1) follows from $P_{B',D',\cU} \subset P_{B,D,\cU}$, and the assertion (2) follows from the assertion (1) by Proposition \ref{domination}.
\end{proof}

\begin{crl}
\label{generalised hierarchy}
Let $(B,D) \in (\R_{\geq 0}^{\bP})^2$.
\bi
\item[(1)] For any $\cU \in \beta \bP \setminus \bP$, if $\lim_{p \to \cU} B(p) > 0$ and $\lim_{p \to \cU} D(p) > 0$, then every element of $\cA$ $(B,D)$-transcendental at $\cU$ is transcendental at $2^{\aleph}$-many $\phi \in \Spec(\cA)(\C)$.
\item[(2)] If $B(p) > 0$ and $D(p) > 0$ for all but finitely many $p \in \bP$, then every everywhere $(B,D)$-transcendental element of $\cA$ is everywhere transcendental.
\ei
\end{crl}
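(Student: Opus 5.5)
The plan is to prove (1) by reducing $(B,D)$-transcendence at $\cU$ to transcendence at every $\C$-rational point lying over the maximal ideal $m_{\cU}$. Then (2) follows from (1) applied at every $\cU \in \beta \bP \setminus \bP$, together with Proposition \ref{everywhere transcendental characterisation}.

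For (1), fix $\cU$ with $B_\infty \coloneqq \lim_{p \to \cU} B(p) > 0$ and $D_\infty \coloneqq \lim_{p \to \cU} D(p) > 0$, and let $f \in \cA$ be $(B,D)$-transcendental at $\cU$.
\begin{itemize}
\item[(a)] Take $F \in \Q[x] \setminus \ens{0}$ and clear denominators by a non-zero integer, so that $F \in \Z[x] \setminus \ens{0}$.
\item[(b)] Reading $B_\infty, D_\infty$ as positive real numbers, Proposition \ref{BD-polynomial} gives $F_{\cA} \in P_{B,D,\cU}$.
\item[(c)] Since the coefficients of $F$ are fixed integers, $F_{\cA}(p) \neq 0$ for all but finitely many $p$. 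Hence $\lim_{p \to \cU} F_{\cA}(p) \neq 0$, because $\cU$ is non-principal.
\item[(d)] By the definition of $(B,D)$-transcendence, $F(f) \notin m_{\cU}$.
\end{itemize}
Since $F$ is arbitrary, the image of $f$ in $\cA/m_{\cU}$ is transcendental over $\Q$. By the second assertion of Proposition \ref{quasi-finite}, $\cA/m_{\cU}$ embeds into $\C$, so $f$ is transcendental at some $\phi \in \Spec(\cA)(\C)$ with $\ker(\phi) = m_{\cU}$. Lifting $f$ to $\prod_p \Fp$, Proposition \ref{transcendence implies naive transcendence} (from (2) to (3)) upgrades this single point to $2^{\aleph}$-many points.

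For (2), fix an arbitrary $\cU \in \beta \bP \setminus \bP$. Since $B(p) > 0$ and $D(p) > 0$ on a cofinite subset of $\bP$, and every cofinite set lies in $\cU$, {\L}o\'s's theorem gives $\lim_{p \to \cU} B(p) > 0$ and $\lim_{p \to \cU} D(p) > 0$ in $\lim_{p \to \cU} \R$. This is how the hypothesis of (2) feeds into the hypothesis of (1). The argument of (1) then shows $F(f) \notin m_{\cU}$ for every $F \in \Q[x] \setminus \ens{0}$. Since every maximal ideal of $\cA$ is of the form $m_{\cU}$ by Proposition \ref{characterisation of Spec} (2), we get $F(f) \in \cA^{\times}$. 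Proposition \ref{everywhere transcendental characterisation} then yields everywhere transcendence.

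The main obstacle is step (b): placing the constant-coefficient polynomial $F_{\cA}$ in $P_{B,D,\cU}$. That needs constants to lie in $O(B)$ and $O(D)$ along $\cU$, i.e.\ $1 \leq_{\cU} cB$ for some real $c$. This follows from $\lim_{p \to \cU} B(p)$ being a positive standard real, but not from mere non-standard positivity. In (2) the hypothesis only provides $\lim_{p \to \cU} B(p) > 0$ non-standardly, so I would check first whether this limit is non-infinitesimal. If it is infinitesimal, for instance when $B(p) = 1/p$, then every $F \in P_{B,D,\cU}$ satisfies $\n{F} <_{\cU} 1$, forcing $F =_{\cU} 0$. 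In that case $(B,D)$-transcendence is vacuous, every element, including $0$, is everywhere $(B,D)$-transcendental, and (2) as worded cannot hold. So the argument for (2) goes through only if one reads ``$B(p) > 0$'' as $B$ being bounded away from $0$ along every $\cU$, and this step is the one I would test first.
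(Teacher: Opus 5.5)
Your proof of (1) follows the paper's argument. The paper cites Proposition \ref{BD-polynomial}, Proposition \ref{hierarchy}~(1), the embedding $\cA/m_{\cU} \hookrightarrow \C$ from Proposition \ref{quasi-finite}, and Proposition \ref{transcendence implies naive transcendence} to pass from one point to $2^{\aleph}$-many, as you do. Going through Proposition \ref{hierarchy}~(1) with $(B',D')=(1,1)$ before applying Proposition \ref{BD-polynomial} also covers the case where $\lim_{p \to \cU} B(p)$ or $\lim_{p \to \cU} D(p)$ is infinite. Your step (b) skips that case by reading both limits as standard reals, but your closing criterion $1 \leq_{\cU} cB$ is exactly what the reduction uses.

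Your objection to (2) is correct, and the defect is in the paper, not in your proposal. Take $B(p) = 1/p$ and $D = 1$. Then for every $\cU$, every $F \in P_{B,D,\cU}$ satisfies $F =_{\cU} 0$. Likewise, for $B = 1$ and $D(p) = 1/p$, every such $F$ is $\cU$-constant. In both cases $(B,D)$-transcendence is vacuous, so $0 \in \cA$ is everywhere $(B,D)$-transcendental while being transcendental at no $\phi$.

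The paper's proof of (2) invokes Proposition \ref{hierarchy}~(2) with $(B',D') = (1,1)$. That needs $1 \in O(B)$ and $1 \in O(D)$, and positivity of $B(p)$ and $D(p)$ on a cofinite set does not give it. The same example also refutes (1) if $\lim_{p \to \cU} B(p) > 0$ is read literally in $\lim_{p \to \cU} \R$, where positive infinitesimals are allowed. So (1) needs non-infinitesimal positivity, as you assumed.

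The right hypothesis for (2) is $\liminf_{p} B(p) > 0$ and $\liminf_{p} D(p) > 0$, i.e.\ $1 \in O(B)$ and $1 \in O(D)$. This is equivalent to your ``bounded away from $0$ along every $\cU$'': if $\liminf_p B(p) = 0$, any non-principal ultrafilter containing an infinite set along which $B \to 0$ makes $B$ infinitesimal. Under this hypothesis both routes work and are the same argument organised differently. The paper goes through Proposition \ref{hierarchy}~(2), then Proposition \ref{BD-polynomial} and Proposition \ref{characterisation of Spec}~(2). You run (1) at every $\cU$, deduce $F(f) \in \cA^{\times}$, and apply Proposition \ref{everywhere transcendental characterisation}.
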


\begin{proof}
The assertion (1) follows from Proposition \ref{characterisation of Spec} (1), the second assertion of Proposition \ref{quasi-finite}, Proposition \ref{transcendence implies naive transcendence}, Proposition \ref{BD-polynomial}, and Corollary \ref{hierarchy} (1). The assertion (2) follows from Proposition \ref{characterisation of Spec} (2), Proposition \ref{BD-polynomial}, and Corollary \ref{hierarchy} (2).
\end{proof}

Thus we obtained a hierarchy of transcendence not only generalising but also strengthening the naive transcendence.

\section{Transcendence Criteria}
\label{Transcendence Criteria}

In this section, we give criterion of the transcendence formulated in \S \ref{Local Structure}. We note that any criterion of naive transcendence such as \cite{MS26} Lemma 2.5 (cf.\ proof of \cite{LW25-2} Theorem 1) and \cite{MS26} Lemma 2.7 (cf.\ proof of \cite{LW25-1} Theorem 1.3) directly gives a criterion of the existence of $2^{\aleph}$-many points witnessing transcendence by Proposition \ref{transcendence implies naive transcendence}. In order to avoid such obvious extensions, we give criteria of transcendence extending \cite{MS26} Lemma 2.1 (cf.\ \cite{AF24} Proposition 3.7) and \cite{MS26} Lemma 2.3 in less trivial ways by weakening the assumption or strengthening the conclusion.

\begin{prp}[Extension of \cite{MS26} Lemma 2.1]
\label{criterion 1}
Let $f \in \prod_{p \in \bP} \Fp$ and $C \in \R_{\geq 0}^S$ with $S \in \cP_{= \aleph_0}(\N)$ and $x^d \in O(C(x))$ for any $d \in \N$. Set $S' \coloneqq \set{n \in S}{\exists p \in \bP_{\geq C(n)}[f(p) = n + p \Z]}$. If $\# S' = \aleph_0$, then the image of $f$ in $\cA$ is transcendental at $2^{\aleph}$-many $\phi \in \Spec(\cA)(\C)$.
\end{prp}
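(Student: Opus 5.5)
The plan is to verify condition (4) of Proposition \ref{transcendence implies naive transcendence} and then invoke the implication (4) $\Rightarrow$ (3) proved there. That is, I will build an infinite set $U \in \cP_{= \aleph_0}(\bP)$ of primes out of the witnesses in the definition of $S'$, and show that every non-zero $F \in \Q[x]$ is $p$-integral with $F(f(p)) \neq 0$ for all but finitely many $p \in U$.

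First, the construction of $U$. For each $n \in S'$ choose a prime $p_n \in \bP_{\geq C(n)}$ with $f(p_n) = n + p_n \Z$. Applying the hypothesis with $d = 1$ gives $c \in \R_{> 0}$ and $n_0$ such that $n \leq c C(n)$ for all $n \in S_{\geq n_0}$. Hence $C(n) \to \infty$ along $S$, and so $p_n \to \infty$ along the infinite set $S'$. We may therefore pass to an infinite subset $S'' \subset S'$ on which $n \mapsto p_n$ is injective and strictly increasing, and set $U \coloneqq \set{p_n}{n \in S''}$, which is countably infinite. Because the map $n \mapsto p_n$ is a bijection $S'' \to U$, any finite set of exceptional indices $n$ yields a finite set of exceptional primes. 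This bookkeeping is why I insist on injectivity.

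Second, fix $F \in \Q[x] \setminus \ens{0}$ and write $F = G/m$ with $G \in \Z[x] \setminus \ens{0}$ of degree $d$ and $m \in \Z_{> 0}$. The following indices $n \in S''$ are exceptional:
\bi
\item[(a)] those with $p_n \mid m$, which are finitely many;
\item[(b)] those with $G(n) = 0$, which are finitely many because $G$ has finitely many roots;
\item[(c)] those with $\v{G(n)} \geq p_n$.
\ei
For (c), bound $\v{G(n)} \leq K n^d$ for $n \geq 1$, where $K$ is the sum of the absolute values of the coefficients of $G$. By the hypothesis $x^{d+1} \in O(C(x))$ there are $c' \in \R_{> 0}$ and $n_1$ with $n^{d+1} \leq c' C(n)$ for $n \in S_{\geq n_1}$. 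For $n > \max\ens{n_1, K c'}$ this gives
\be
\v{G(n)} \leq K n^d < \frac{n^{d+1}}{c'} \leq C(n) \leq p_n,
\ee
so (c) is also finite. Outside these finitely many indices, $F \in \Z_{(p_n)}[x]$. Moreover $F(f(p_n)) = m^{-1} G(n) + p_n \Z$, which is non-zero in $\mathbb{F}_{p_n}$ because $0 < \v{G(n)} < p_n$. Taking $U_0$ to be the finite set of primes $p_n$ attached to the exceptional $n$ verifies (4).

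The only real subtlety is the polynomial growth comparison in (c). The argument needs $x^{d+1}$, not merely $x^d$, in $O(C)$ to absorb the constant $K$, and this is available because the hypothesis holds for every $d$. The remaining step requiring care is passing from the index set $S''$ to the prime set $U$, which the injective choice of $p_n$ handles.
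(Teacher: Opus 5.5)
Your proposal is correct and follows essentially the paper's proof: the same injective choice of witness primes (using $x \in O(C(x))$ to get finite-to-one), and the same finitely many exceptional indices (primes dividing the denominator, roots of $F$, and the size bound via $x^{\deg(F)+1} \in O(C)$). The only difference is that you cite the implication (4) $\Rightarrow$ (3) of Proposition \ref{transcendence implies naive transcendence}, whereas the paper re-runs that ultrafilter argument inline.
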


We note that $C(x) = 2^x$ witnesses the setting under the condition of \cite{MS26} Lemma 2.1. Unlike \cite{MS26} Lemma 2.1, we are only assuming the existence of a single $p \in \bP_{\geq C(n)}$ for each $n \in S$.

\begin{proof}
Take a choice map $\pi \colon S' \to \bP$ of $(\set{p \in \bP_{\geq C(n)}}{f(p) = n + p \Z})_{n \in S'}$. By $x \in O(C(x))$, we have $\# \set{n \in S}{p \geq C(n)} < \aleph_0$ for any $p \in \bP$, and hence $\pi$ is finite-to-one. Therefore, replacing $S$ by an infinite subset if necessary, we may assume the injectivity of $\pi$.

\vs
We denote by $\ol{f} \in \cA$ the image of $f$, and by $X \subset \Spec(\cA)$ the clopen subset corresponding to $\set{\cU \in \beta \bP \setminus \bP}{\pi(S') \in \cU} \subset \beta \bP \setminus \bP$ by the homeomorphism in Proposition \ref{characterisation of Spec} (2). Set $Y \coloneqq \set{\phi \in \Spec(\cA)(\cC)}{\ker(\phi) \in X}$. We have $\# Y = 2^{\aleph}$ by $\# \pi(S') = \aleph_0$ (cf.\ the proof of Proposition \ref{transcendence implies naive transcendence}). It suffices to show that $\ol{f}$ is transcendental at any $\phi \in Y$.

\vs
Let $F \in \Q[x] \setminus \ens{0}$. It suffices to show $F(f) \notin \ker(\phi)$. Take $r \in \Z_{> 0}$ with $rF \in \Z[x]$. We denote by $\bP_0 \in \cP_{< \aleph_0}(\bP)$ the set of prime factors of $r$. Set $S'' \coloneqq \set{n \in S'}{F(n) = 0}$. Since $F$ has at most finite zeros in $\Q$, we have $\# S'' < \aleph_0$. By $\v{rF(x)} \in O(x^{\deg(F)})$ and $x^{\deg(F)+1} \in O(C)$, there exists some $n_0 \in S'$ such that for any $n \in S'_{\geq n_0}$, $n \notin S''$ and $\max (\bP_0 \cup \ens{\v{rF(n)}}) < C(n)$ hold. This implies $\pi(n) \notin \bP_0$ and $\v{rF(n)} < \pi(n)$, i.e.\ $F \in \Z_{(\pi(n))}[x]$ and $F(f(\pi(n))) \not\equiv 0 \pmod{\pi(n)}$, for any $n \in S'_{\geq n_0}$. We obtain $e_{\pi(S'_{\geq n_0})} \in F(f) \cA$.

\vs
By $\phi \in Y$, there exists some $\cU \in \beta \bP \setminus \bP$ such that $\pi(S') \in \cU$ and $\ker(\phi)$ is generated by $\set{1 - e_U}{U \in \cU}$, where $e_U \in \cA$ denotes the image of the characteristic function of $U$ for each $U \in \cU$. Since $\pi(S') \setminus \pi(S'_{\geq n_0})$ is a finite set, we have $\pi(S'_{\geq n_0}) \in \cU$, and hence $1 - e_{\pi(S'_{\geq n_0})} \in \ker(\phi)$. This implies $F(f) \notin \ker(\phi)$ by $e_{\pi(S'_{\geq n_0})} \in F(f) \cA$.
\end{proof}

\begin{exm}
For each $n \in \N$, we set $p_n \coloneqq \min \bP_{\geq 2^{n+1}}$. By Bertrand's postulate, the assignment $n \mapsto p_n$ defines an injective map $\N \hookrightarrow \bP$. For any $f \in \Z^{\bP}$ with $f(p_n) = n$ for any $n \in \N$, the image of $(f(p) + p \Z)_{p \in \bP}$ in $\cA$ is transcendental at $2^{\aleph}$-many $\phi \in \Spec(\cA)(\C)$ by Proposition \ref{criterion 1} applied to $C(x) = 2^{x+1}$, and hence is naively transcendental by Proposition \ref{transcendence implies naive transcendence}.
\end{exm}

\begin{prp}[Extension of \cite{MS26} Lemma 2.3]
\label{criterion 2}
Let $f \in \prod_{p \in \bP} \Fp$, $C \in \Z^{\bP}$, and $(B,D) \in (\R_{\geq 0}^{\bP})^2$.
\bi
\item[(1)] For any $\cU \in \beta \bP \setminus \bP$, if $B + 1 \in_{\cU} o(\v{C})$ and $B(x)(D(x) + 1)\v{C(x)}^{dD(x)} \in_{\cU} o(x)$ for any $d \in \N$ and $\lim_{p \to \cU} f(p) = \lim_{p \to \cU} (C(p) + p \Z)$, then the image of $f$ in $\cA$ is $(B,D)$-transcendental at $\cU$.
\item[(2)] If $B + 1 \in o(\v{C})$ and $B(x)(D(x) + 1)\v{C(x)}^{dD(x)} \in o(x)$ for any $d \in \N$ and $f(p) = C(p) + p \Z$ for all but finitely many $p \in \bP$, then the image of $f$ in $\cA$ is everywhere $(B,D)$-transcendental.
\ei
\end{prp}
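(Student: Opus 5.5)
The plan is a direct \L{}o\'s-type argument: lift each $F(p)$ to an integer polynomial with small coefficients, and show that its value at $C(p)$ is a nonzero integer of absolute value $< p$, for $\cU$-many $p$. Assertion (2) will then follow from (1) via Proposition \ref{domination}.

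For (1), fix $F \in P_{B,D,\cU}$ with $\lim_{p \to \cU} F(p) \neq 0$. Choose $c \in \R_{\geq 1}$ and $d \in \N_{\geq 1}$ such that the set $V_0$ of $p$ with $F(p) \neq 0$, $\n{F}(p) \leq cB(p)$ and $\deg(F(p)) \leq dD(p)$ lies in $\cU$. For $p \in V_0$, let $G_p \in \Z[x]$ be the polynomial whose coefficient at degree $k$ is $R_p(a_k(F(p)))$. Then $G_p$ reduces to $F(p)$ modulo $p$, is nonzero, has coefficients of absolute value $\leq cB(p)$, and has degree $\leq dD(p)$.

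Since $B + 1 \in_{\cU} o(\v{C})$, applied with constant $(2c)^{-1}$, there is a set $V_1 \in \cU$ on which $\v{C(p)} \geq 2c(B(p)+1) \geq 1 + cB(p)$; in particular $\v{C(p)} \geq 1$ there. On $V_0 \cap V_1$ we obtain two facts.
\begin{itemize}
\item[(a)] By the Cauchy root bound, every complex root $z$ of $G_p$ satisfies $\v{z} < 1 + \max_k \v{a_k(G_p)}/\v{a_{\mathrm{top}}(G_p)} \leq 1 + cB(p)$. Here we use that the leading coefficient is a nonzero integer, so it has absolute value at least $1$. Hence $G_p(C(p)) \neq 0$ as an integer; the case where $G_p$ is a nonzero constant is trivial.
\item[(b)] The trivial bound gives $\v{G_p(C(p))} \leq cB(p)(dD(p)+1)\v{C(p)}^{dD(p)} \leq cd\,B(p)(D(p)+1)\v{C(p)}^{dD(p)}$, using $\v{C(p)} \geq 1$. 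Applying the hypothesis $B(x)(D(x)+1)\v{C(x)}^{dD(x)} \in_{\cU} o(x)$ with constant $(2cd)^{-1}$ gives a set $V_2 \in \cU$ on which $\v{G_p(C(p))} < p$.
\end{itemize}

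On $V \coloneqq V_0 \cap V_1 \cap V_2 \cap \set{p}{f(p) = C(p) + p\Z} \in \cU$, the integer $G_p(C(p))$ is nonzero with absolute value $< p$. Hence $F(p)(f(p)) = G_p(C(p)) + p\Z \neq 0$ in $\Fp$. By \L{}o\'s's theorem, $(\lim_{p \to \cU} F(p))(\lim_{p \to \cU} f(p)) = \lim_{p \to \cU} F(p)(f(p)) \neq 0$, which is the required $(B,D)$-transcendence at $\cU$.

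For (2), let $\cU \in \beta \bP \setminus \bP$ be arbitrary. Proposition \ref{domination} turns the hypotheses $B+1 \in o(\v{C})$ and $B(D+1)\v{C}^{dD} \in o(x)$ into their $\in_{\cU}$ versions. Since the set where $f(p) = C(p) + p\Z$ is cofinite and $\cU$ is non-principal, this set lies in $\cU$, so $\lim_{p \to \cU} f(p) = \lim_{p \to \cU}(C(p) + p\Z)$. Then (1) applies at every $\cU$.

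The main obstacle is bookkeeping the constants in (b): the exponent $dD$ must use an integer $d$ absorbing the $O(D)$ constant, and the $o$-constants must be chosen to absorb $c$ and $d$. The only genuinely non-formal input is the nonvanishing in (a), which is where the hypothesis $B+1 \in o(\v{C})$ is used.
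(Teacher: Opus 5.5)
Your proof is correct and follows essentially the same route as the paper's. You lift $F(p)$ to the integer polynomial with coefficients $R_p(a_k(F(p)))$, use Cauchy's root bound together with $B+1 \in_{\cU} o(\v{C})$ to get $G_p(C(p)) \neq 0$, use the growth hypothesis to get $\v{G_p(C(p))} < p$ on a set in $\cU$, and deduce (2) from (1) via Proposition \ref{domination}. The differences are cosmetic: extracting $\v{C(p)} \geq 1$ first lets you drop the paper's $\max\{1,\v{C}^{\deg(F)}\}$ and auxiliary exponent $d' \in \{0,d\}$, and you state explicitly the intersection with the set where $f(p) = C(p) + p\Z$, which the paper leaves implicit.
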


We note that $(C,B,D) = ((a_p)_{p \in \bP},1,1)$ witnesses the setting under the condition of \cite{MS26} Lemma 2.3 by Proposition \ref{BD-polynomial}. Unlike \cite{MS26} Lemma 2.3, our conclusion includes a result on the hierarchy of transcendence.

\begin{proof}
The assertion (2) follows from the assertion (1) and Proposition \ref{domination}. We show the assertion (1). Let $F \in P_{B,D,\cU}$ with $\lim_{p \to \cU} F(p) \neq 0$. By $\deg(F) \in_{\cU} O(D)$, there exists some $d \in \N$ such that $\deg(F) \leq_{\cU} dD$. By $\n{F} \in_{\cU} O(B)$ and $B(x) (D(x) + 1) \v{C(x)}^{d'D(x)} \in_{\cU} o(x)$ for any $d' \in \ens{0,d}$, we have $2 \n{F}(x)(\deg(F(x)) + 1) \max \ens{1,\v{C(x)}^{\deg(F(x))}} \leq_{\cU} x$. By $\n{F} \in_{\cU} O(B)$ and $B + 1 \in_{\cU} o(\v{C})$, we have $2 \n{F} + 2 \leq_{\cU} \v{C}$. Therefore, there exists some $U \in \cU$ such that $F(p) \neq 0$, $\n{F}(p)(\deg(F(p)) + 1)\max \ens{1,\v{C(p)}^{\deg(F(p))}} < p$, and $\n{F}(p) + 1 < \v{C(p)}$ for any $p \in U$.

\vs
Let $p \in U$. Set $F_p \coloneqq \sum_{d=0}^{\infty} R_p(a_d(F(p))) x^d \in \Z[x]$. We have
\be
\v{F_p(C(p))} \leq \sum_{d=0}^{\deg(F(p))} \v{R_p(a_d(F(p)))} \v{C(p)}^d \leq \n{F}(p)(\deg(F(p)) + 1)\max \ens{1,\v{C(p)}^{\deg(F(p))}} < p.
\ee
If $\deg(F(p)) = 0$, then we have $F_p(C(p)) = a_0(F_p) \neq 0$ by $F(p) \neq 0$. If $\deg(F(p)) > 0$, then we have
\be
\max{}_{d=0}^{\deg(F(p))-1} \v{\frac{R_p(a_d(F(p)))}{R_p(a_{\deg(F(p))}(F(p)))}} + 1 \leq \n{F}(p) + 1 < \v{C(p)}
\ee
and hence $F_p(C_p) \neq 0$ by Cauchy's bound of the absolute value of a real root. Therefore, we conclude $\v{F_p(C_p)} \in \Z \cap [1,p-1]$, and hence $F(p)(f(p)) \neq 0$.

\vs
We obtain $\set{p \in \bP}{F(p)(f(p)) \neq 0} \in \cU$, and hence $(\lim_{p \to \cU} F(p))(\lim_{p \to \cU} f(p)) \neq 0$. This implies that $f$ is $(B,D)$-transcendental at $\cU$.
\end{proof}

\begin{exm}
Let $\alpha \in \R_{> 0}$. The image of $(\lfloor (\log p)^{\alpha} \rfloor + p \Z)_{p \in \bP}$ in $\cA$ is everywhere $((\log x)^{\beta},\log \log x)$-transcendental for any $\beta \in (0,\alpha)$ by Proposition \ref{criterion 2}, and hence is naively transcendental by Proposition \ref{transcendence implies naive transcendence} and Proposition \ref{generalised hierarchy}.
\end{exm}

\section{Relation to Prime Gaps}
\label{Relation to Prime Gaps}

In this section, we observe a relation between asymptotic behaviour of prime gaps and transcendence in $\cA$. We define a map $\sigma \colon \bP \to \bP$ by
\be
\sigma(p) \coloneqq \min \bP_{> p}.
\ee
For $k \in \N$, we denote by $\sigma^k$ the $k$-th iterated composition of $\sigma$. In the view that $\sigma$ plays a role analogous to the successor function on $\N$, $\sigma^k$ for $k \in \N$ plays a role analogous to the parallel shift $n \mapsto n + k$ on $\N$. By $\sigma^0 = \id_{\bP}$, the image of $(\sigma^0(p) + p \Z)_{p \in \bP}$ in $\cA$ is $0$. How about a higher iteration? We give a conditional answer to the question.

\vs
In order to state the answer, we prepare convention. We define a map $\Delta \colon \bP \to \N_{> 0}$ by
\be
\Delta(p) \coloneqq \sigma(p) - p,
\ee
and a map $\epsilon \colon \bP \to \R_{> 1}$ by
\be
\epsilon(p) \coloneqq \log_{\log p} \Delta(p).
\ee
We consider the following conjecture:

\begin{conj}
\label{weak Cramer conjecture}
The following relation holds:
\be
\lim_{p \in \bP} \frac{\epsilon(p) \log \log p}{\log p} = 0
\ee
\end{conj}

We note that Cram\'er's conjecture (cf.\ \cite{Cra36} (4)) stating $\Delta(p) \in O((\log p)^2)$ immediately implies
\be
\limsup_{p \in \bP} \epsilon(p) \leq 2,
\ee
and hence Conjecture \ref{weak Cramer conjecture} follows from Cram\'er's conjecture. Under Conjecture \ref{weak Cramer conjecture}, we obtain the naive transcendence of the image of the iterated shift $\sigma^k$ in $\cA$ for $k \in \N_{> 0}$.

\begin{thm}
For any $k \in \N_{> 0}$, the image of $(\sigma^k(p) + p \Z)_{p \in \bP}$ in $\cA$ is transcendental at $2^{\aleph}$-many $\phi \in \Spec(\cA)(\C)$ under Conjecture \ref{weak Cramer conjecture}.
\end{thm}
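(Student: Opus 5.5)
The plan is to apply Proposition \ref{criterion 1}, the extension of the Anzawa--Funakura criterion, to $f(p) \coloneqq \sigma^k(p) + p\Z$. Since $\sigma^k(p) - p = \sum_{j=0}^{k-1} \Delta(\sigma^j(p))$, we have $f(p) = n + p\Z$ with $n \coloneqq \sigma^k(p) - p \in \N_{>0}$. So $f$ takes the value $n$ at $p$ exactly when the gap from $p$ to its $k$-th successor prime equals $n$. We therefore take
\[
S \coloneqq \set{\sigma^k(p) - p}{p \in \bP},
\]
and we need a function $C$ on $S$ with $x^d \in O(C(x))$ for every $d \in \N$. We also need infinitely many $n \in S$ admitting some $p \geq C(n)$ with $\sigma^k(p) - p = n$. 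Choosing, for each $n \in S$, the least such $p$ and putting $C(n)$ equal to it, it suffices to show that $S$ is infinite and that the least $p$ realising a $k$-fold gap of size $n$ grows faster than every polynomial in $n$.

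First, $S$ is infinite. For each $p$ we have $\sigma^k(p) - p \geq \Delta(p)$, and prime gaps are unbounded because $m!+2, \dots, m!+m$ are all composite. Hence $\sup S = \infty$, and in particular $\# S = \aleph_0$.

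Second, and this is the heart of the argument, we bound $n$ in terms of $p$ using Conjecture \ref{weak Cramer conjecture}. By definition $\Delta(q) = (\log q)^{\epsilon(q)} = \exp(\epsilon(q) \log\log q)$, and the conjecture says $\epsilon(q)\log\log q = o(\log q)$. Thus for every $\delta > 0$ we get $\Delta(q) \leq q^{\delta}$ for all sufficiently large $q$. For $p$ large, each $\sigma^j(p)$ with $j<k$ is at most $2p$, since Bertrand's postulate iterated $k$ times keeps $\sigma^j(p) \leq 2^j p$ and the gaps are $o(p)$ anyway. Consequently
\[
n = \sigma^k(p) - p \leq k\,(2^k p)^{\delta} \ll p^{\delta}.
\]
Inverting, any $p$ with $\sigma^k(p) - p = n$ satisfies $p \gg n^{1/\delta}$ for all large $n \in S$. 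Since $\delta$ is arbitrary, $C(n) \coloneqq \min\set{p \in \bP}{\sigma^k(p)-p = n}$ satisfies $n^d \in O(C(n))$ for every $d$. With this $C$, every $n \in S$ lies in $S'$ via that very prime, so $S' = S$ is infinite, and Proposition \ref{criterion 1} gives transcendence at $2^{\aleph}$-many $\phi$.

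The main obstacle is the uniformity in the second step. The $O$-relation in Proposition \ref{criterion 1} needs a single constant for each $d$ on a tail of $S$. The bound $n \leq p^{\delta}$ holds only for $p \geq p_0(\delta)$, but the finitely many $p < p_0(\delta)$ produce only finitely many values of $n$, so they do not affect the tail. I would also double check that $C(n)$ is finite for $n \in S$, which is immediate from the definition of $S$. Finally, I would verify the harmless point that $C$ is only required to be defined on $S \subset \N$, and that $x \in O(C(x))$ follows from the case $d = 1$, which is what the proof of Proposition \ref{criterion 1} uses.
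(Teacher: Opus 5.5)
Your proposal is correct and is essentially the paper's proof. Both apply Proposition \ref{criterion 1} with $C(n)$ the least prime realising a $k$-fold gap of size $n$, and both obtain $x^d \in O(C(x))$ from the bound $\Delta(q) < q^{1/(d+1)}$ for large $q$ (given by Conjecture \ref{weak Cramer conjecture}) combined with Bertrand's postulate. The only difference is bookkeeping: the paper restricts to primes above thresholds $\pi_d$ through the recursively defined sequence $s$, whereas you take all gap values and note that primes below any fixed threshold yield only finitely many $n$, which is a valid and slightly cleaner way to handle the tail.
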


\begin{proof}
For each $d \in \N$, we denote by $\pi_d \in \bP$ the least prime number $p$ satisfying
\be
\sup_{p' \in \bP_{\geq p}} \frac{\epsilon(p) \log \log p}{\log p} < \frac{1}{d+1},
\ee
which exists by Conjecture \ref{weak Cramer conjecture}. By the existence of prime deserts, $\set{\sigma^k(p) - p}{p \in \bP}$ is unbounded. Therefore, the following recursive relation uniquely defines a strictly increasing sequence $s \colon \N \to \N$:
\be
s(n) \coloneqq \min \left( \set{\sigma^k(p) - p}{p \in \bP_{\geq \pi_n}} \setminus \set{s(i)}{i \in \N_{< n}} \right)
\ee
Set
\be
S \coloneqq \im(s).
\ee
By the injectivity of $s$, we have $\# S = \aleph_0$. We denote by $s' \colon S \to \N$ the inverse of the restriction $\N \to S$ of $s$, and define a map $C \colon S \to \R_{\geq 0}$ by
\be
C(n) \coloneqq \min \set{p \in \bP_{\geq \pi_{s'(n)}}}{\sigma^k(p) = n},
\ee
which makes sense by the definition of $S$. We show $x^d \in O(C(x))$ for any $d \in \N$. Set $n_0 \coloneqq \min \set{n \in S_{\geq 2^k k^{d+1}}}{s'(n) \geq d}$. Let $n \in S_{\geq n_0}$. We have $C(n) \in \bP_{\geq \pi_{s'(n)}}$ by the definition of $C$, and hence $\sigma^j(C(n)) \geq \pi_{s'(n)} \geq \pi_d$ for any $j \in \N$. For any $p \in \bP_{\geq \pi_d}$, we have
\be
\frac{\epsilon(p) \log \log p}{\log p} < \frac{1}{d+1}
\ee
by the definition of $\pi_d$, and hence
\be
\Delta(p) = (\log p)^{\epsilon(p)} < (\log p)^{\frac{\log p}{(d+1) \log \log p}} = p^{\frac{1}{d+1}},
\ee
i.e.\
\be
\Delta(p)^{d+1} < p.
\ee
This implies
\be
n^{d+1} & = & (\sigma^k(C(n)) - C(n))^{d+1} = \left( \sum_{j=0}^{k-1} \sigma^{j+1}(C(n)) - \sigma^j(C(n)) \right)^d = \left( \sum_{j=0}^{k-1} \Delta(\sigma^j(C(n))) \right)^{d+1} \\
& \leq & \left( k \max{}_{j=0}^{k-1} \Delta(\sigma^j(C(n))) \right)^{d+1} = k^{d+1} \max{}_{j=0}^{k-1} \Delta(\sigma^j(C(n)))^{d+1} < k^{d+1} \max{}_{j=0}^{k-1} \sigma^j(C(n)) \\
& < & 2^k k^{d+1} C(n),
\ee
and hence
\be
n^d < C(n)
\ee
by $n \geq n_0 \geq 2^k k^{d+1}$. This implies $x^d \in O(C(x))$.

\vs
For any $n \in S$, there exists some $p \in \bP_{\geq \pi_{s'(n)}}$ such that $\sigma^k(p) - p = n$ by the definition of $S$, and hence $p \geq C(n)$ by the definition of $C$. Therefore, the assertion follows from Proposition \ref{criterion 1} applied to $C$.
\end{proof}

We finish this paper with summary of preceding studies related to Conjecture \ref{weak Cramer conjecture}. To begin with, pigeonhole principle and prime number theorem imply
\be
\liminf_{p \in \bP} \frac{\Delta(p)}{\log p} \leq 1,
\ee
and hence
\be
\liminf_{p \in \bP} \epsilon(p) \leq 1.
\ee
The first inequality has ever been improved to be many inequalities whose right hand side is a positive constant smaller than $1$, and is finally extended by D.\ A.\ Goldston, J.\ Pintz, and C.\ Y.\ Yildirim to the equality
\be
\liminf_{p \in \bP} \frac{\Delta(p)}{\log p} = 0
\ee
in \cite{GPY09} Theorem 2. Also the gap of consecutive primes is studied well. The first trial on the ratio with $\log p$ is the conditional result
\be
\liminf_{p \in \bP} \frac{\sigma^2(p) - p}{\log p} = 0
\ee
under Elliott--Halberstam conjecture by them (cf.\ the paragraph after \cite{GPY09} Theorem 3). However, starting from Maynard--Tao theorem
\be
\liminf_{p \in \bP} \sigma^k(p) - p \ll k^3 e^{4k}
\ee
for any $k \in \N$ (cf.\ \cite{May15} Theorem 1.1), there have been many improvements of the right hand side of the inequality. We note that Maynard--Tao theorem implies that the left hand side of Twin Prime Conjecture
\be
\liminf_{p \in \bP} \Delta(p) = 2
\ee
is finite. Although Twin Prime Conjecture is still open, the estimation of upper bounds of the left hand side is being improved day-by-day.

\vs
We explained results on $\liminf$. Concerning $\limsup$, Cram\'er's theorem $\Delta(x) \in O(\sqrt{x} \log x)$ under Riemann hypothesis (cf.\ \cite{Cra36} Theorem 1) immediately implies
\be
\limsup_{p \in \bP} \frac{\epsilon(p) \log \log p}{\log p} \leq \frac{1}{2}
\ee
under the same hypothesis. R.\ C.\ Baker, G.\ Harman, and J.\ Pintz unconditionally showed $\Delta(x) \leq x^{0.535}$ in \cite{BHP01} Theorem 1, and the result immediately implies
\be
\limsup_{p \in \bP} \frac{\epsilon(p) \log \log p}{\log p} \leq 0.535.
\ee
There is also a result on $\limsup$ of $\epsilon(p)$ alone. Westzynthius's estimation
\be
\limsup_{p \in \bP} \frac{\Delta(p)}{\log p} = \infty
\ee
in \cite{Wes31} implies
\be
\limsup_{p \in \bP} \epsilon(p) \geq 1.
\ee
Westzynthius showed a slightly stronger result stating a lower bound of $\frac{\Delta(p)}{\log p}$ given by an explicit expression using iterated logarithms up to a constant, and there have ever been many improvements of the estimation (cf.\ \cite{Erd35} Theorem 1, \cite{Ran38} Theorem 1, \cite{May16} Theorem 1, \cite{FGKT16} Theorem 1). However, there is no unconditional result implying either one of
\be
\limsup_{p \in \bP} \epsilon(p) > 1
\ee
or
\be
\limsup_{p \in \bP} \epsilon(p) \leq 2
\ee
as far as we know. We note that the strict inequality
\be
\limsup_{p \in \bP} \epsilon(p) < 2,
\ee
is stronger than Cram\'er's conjecture, and conflicts the strong version
\be
\limsup_{p \in \bP} \frac{\Delta(p)}{(\log p)^2} = 1
\ee
of Cram\'er's conjecture (cf.\ the last paragraph of \cite{Cra36} \S I with $P_n = p_n$) implying
\be
\limsup_{p \in \bP} \epsilon(p) = 2.
\ee
At least, the finiteness of the left hand side implies Conjecture \ref{weak Cramer conjecture}, and is open as far as we know.

\vspace{0.3in}
\addcontentsline{toc}{section}{Acknowledgements}
\noindent {\Large \bf Acknowledgements}
\vspace{0.2in}

\noindent
I thank S.\ Seki for informing me of \cite{MS26}. It gave me an opportunity to write this survey article. I thank K.\ Tokimoto for reminding me of field theory. I thank all people who helped me to learn mathematics and programming. I also thank my family.


\addcontentsline{toc}{section}{References}
\begin{thebibliography}{99}

\bibitem[AF24]{AF24} T.\ Anzawa and H.\ Funakura, {\it Congruences for the $q$-Fibonacci Sequence Related to Its Transcendence}, The Ramanujan Journal, Volume 63, Issue 4, pp.\ 1057--1072, 2024.

\bibitem[AM69]{AM69} M.\ F.\ Atiyah and I.\ G.\ MacDonald, {\it Introduction To Commutative Algebra}, Addison-Wesley Series in Mathematics, Addison-Wesley Publishing Company, 1969.

\bibitem[BHP01]{BHP01} R.\ C.\ Baker, G.\ Harman, and J.\ Pintz, {\it The Difference between Consecutive Primes, II}, Proceedings of the London Mathematical Society, Volume 83, Issue 3, pp.\ 532--562, 2001.

\bibitem[BS69]{BS69} J.\ L.\ Bell and A.\ B.\ Slomson, {\it Models and Ultraproducts: An introduction}, North-Holland Publishing Company, 1969.

\bibitem[Cra36]{Cra36} H.\ Cram'er, {\it On the Order of Magnitude of the Difference between Consecutive Prime Numbers}, Acta Arithmetica, Volume 2, Number 1, pp.\ 23--46, 1936.

\bibitem[Eda82]{Eda82} K.\ Eda, {\it A Boolean Power and a Direct Product of Abelian Groups}, Tsukuba Journal of Mathematics, Volume 6, Number 2, pp.\ 187--194, 1982.

\bibitem[Eda83]{Eda83} K.\ Eda, {\it Almost-Slender Groups and Fuchs-44-Groups}, Comentarii Mathematici, Universitatis Sancti Pauli, Volume 32, Number 2, pp.\ 131--135, 1983.

\bibitem[E{\L}54]{EL54} A.\ Ehrenfeucht and J.\ {\L}o\'s, {\it Sur le Produits Cart\'esiens des Groupes Cycliques Infinis}, Bulletin de l'Academie Polonaise des Sciences, Volume III, Num\'ero 2, pp.\ 261--263, 1954.

\bibitem[Erd35]{Erd35} P.\ Erd\"os, {\it On the Difference of Consecutive Primes}, The Quarterly Journal of Mathematics, Volume 6, Issue 1, pp.\ 124--128, 1935.

\bibitem[FGKT16]{FGKT16} K.\ Ford, B.\ Green, S.\ Konyagin, and T.\ Tao, {\it Large Gaps between Consecutive Prime Numbers}, Annals of Mathematics, Volume 183, Issue 3, pp.\ 935--974, 2016.

\bibitem[Fuc70]{Fuc70} L.\ Fuchs, {\it Infinite Abelian Groups, Volume I}, Academic Press, 1970.

\bibitem[Fuc15]{Fuc15} L.\ Fuchs, {\it Abelian Groups}, Springer Monographs in Mathematics, Springer, 2015.

\bibitem[GPY09]{GPY09} D.\ A.\ Goldston, J.\ Pintz, and C.\ Y.\ Yildirim, {\it Primes in Tuples I}, Annals of Mathematics, Volume 170, Issue 2, pp.\ 819--862, 2009.

\bibitem[GRW81]{GRW81} R.\ G\"obel, S.\ V.\ Richkov, and B.\ Wald, {\it A General Theory of Slender Groups and fuchs-44-groups}, Abelian Group Theory, Lecture Notes in Mathematics, Volume 874, pp.\ 194-201, Springer, 1981

\bibitem[Iva78]{Iva78} A.\ V.\ Ivanov, {\it A Problem on Abelian Groups}, Mathematics of the USSR-Sbornik, Volume 34, Number 4, pp.\ 461--474, 1978.

\bibitem[KZ]{KZ} M.\ Kaneko and D.\ Zagier, {\it Finite Multiple Zeta Values}, to appear in the Proceedings of the $17$th MSJ-SI conference on Modular forms and Multiple Zeta values.

\bibitem[LW25-1]{LW25-1} F.\ Luca and W.\ Zudilin, {\it Irrationality and Transcendence Questions in the `Poor Man's Ad\`ele Ring'}, The Ramanujan Journal, Volume 67, Issue 4, Article number 88, 2025.

\bibitem[LW25-2]{LW25-2} F.\ Luca and W.\ Zudilin, {\it Poor Man's Transcendence for Frobenius Traces of Elliptic Curves}, arXiv:2507.14773, 2025.

\bibitem[Mar60]{Mar60} J.\ M.\ Maranda, {\it On Pure Subgroups of Abelian Groups}, Archly der Mathematik, Volume XI, pp.\ 1--13, 1960.

\bibitem[May15]{May15} J.\ Maynard, {\it Small Gaps between Primes}, Annals of Mathematics, Volume 181, Issue 1, pp.\ 383--413, 2015.

\bibitem[May16]{May16} J.\ Maynard, {\it Large Gaps between Primes}, Annals of Mathematics, Volume 183, Issue 3, pp.\ 915--933. 2016.

\bibitem[MS26]{MS26} T.\ Matsusaka and S.\ Seki, {\it Some Results on Naive Transcendence in the Ring of Integers Modulo Infinitely Large Primes}, arXiv:2604.25566, 2026.

\bibitem[Ngu24]{Ngu24} D.\ Q.\ N.\ Nguyen, {\it Higher Reciprocity Law and an Analogue of the Grunwald--Wang Theorem for the Ring of Polynomials over an Ultra-finite Field}, Annals of Pure and Applied Logic, Volume 175, Issue 6, article 103438, 2024.

\bibitem[Ran38]{Ran38} R.\ A.\ Rankin, {\it The Difference between Consecutive Prime Numbers}, Proceedings of the Edinburgh Mathematical Society, Volume 13, Issue 4, pp.\ 242--247, 1938.

\bibitem[Spe50]{Spe50} E.\ Specker, {\it Additive Gruppen von Folgen Ganzer Zahlen}, Portugaliae Mathematica, Volume 9, pp.\ 131--140, 1950.

\bibitem[Wes31]{Wes31} E.\ Westzynthius, {\it \"Uber die Verteilung der Zahlen, die zu den $n$ ersten Primzahlen teilerfremd sind}, Commentationes Physico-Mathematicae, Volume 5, Number 25, pp.\ 1--37, 1931.

\bibitem[Zee55]{Zee55} E.\ C.\ Zeeman, {\it On Direct Sums of Free Cycles}, Journal of the London Mathematical Society, Volume s1-30, Issue 2, pp.\ 195--212, 1955.

\end{thebibliography}
\end{document}